\newtheorem{theorem}{Theorem}[section]
\newtheorem{lemma}[theorem]{Lemma}
\newtheorem{corollary}[theorem]{Corollary}
\newtheorem{definition}[theorem]{Definition}
\numberwithin{equation}{section}
\numberwithin{equation}{section}
\begin{document}

\title{On the Cauchy Problem for a Linear Harmonic Oscillator with Pure Delay}

\author{
Denys Khusainov\thanks{Faculty of Cybernetics, Kyiv National Taras Shevchenko University, Kyiv, Ukraine \hfill
\texttt{d.y.khusainov@gmail.com}} \and
Michael Pokojovy\thanks{Department of Mathematics and Statistics, University of Konstanz, Konstanz, Germany \hfill \texttt{michael.pokojovy@uni-konstanz.de}} \and
Elvin Azizbayov\thanks{Faculty of Mechanics and Mathematics, Baku State University, Azerbaijan \hfill
\texttt{eazizbayov@bsu.az}}
}

\date{\today}

\maketitle

\begin{abstract}
	In the present paper, we consider a Cauchy problem for a linear second order in time abstract differential equation with pure delay.
	In the absence of delay, this problem, known as the harmonic oscillator, has a two-dimensional eigenspace 
	so that the solution of the homogeneous problem can be written as a linear combination of these two eigenfunctions. 
	As opposed to that, in the presence even of a small delay, the spectrum is infinite and a finite sum representation is not possible.
	Using a special function referred to as the delay exponential function,
	we give an explicit solution representation for the Cauchy problem associated with the linear oscillator with pure delay. 
	In contrast to earlier works, no positivity conditions are imposed.
\end{abstract}

{\bf Keywords: } functional-differential equations, harmonic oscillator, pure delay, well-posedness, solution representation

{\bf AMS: }
 	34K06, 
	34K26, 
	39A06, 
 	39B42  

\pagestyle{myheadings}
\thispagestyle{plain}
\markboth{\textsc{D. Khusainov, M. Pokojovy, A. Azizbayov}}{\textsc{Linear Harmonic Oscillator with Pure Delay}}

\section{Introduction}
Let $X$ be a (real or complex) Banach space and let $x(t) \in X$ describe the state of a physical system at time $t \geq 0$. 
With $a(t) = \ddot{x}(t)$ denoting the acceleration of system, the Newton's second law of motion states that
\begin{equation}
	F(t) = Ma(t) \text{ for } t \geq 0, \label{EQUATION_NEWTON_SECOND_LAW}
\end{equation}
where $M \colon D(M) \subset X \to X$ is a linear, continuously invertible, accretive operator
representing the ``mass'' of the system. 
When being displaced from its equilibrium situated in the origin, the system is affected by a restoring force $F(t)$. 
In classical mechanics, this force is postulated to be proportional to the instantaneous displacement, i.e.,
\begin{equation}
	F(t) = Kx(t) \text{ for } t \geq 0 \label{EQUATION_MATERIAL_LAW}
\end{equation}
for some closed, linear operator $K \colon D(K) \subset X \to X$.
When $M^{-1} K$ is a bounded linear operator,
plugging Equation (\ref{EQUATION_MATERIAL_LAW}) into (\ref{EQUATION_NEWTON_SECOND_LAW}),
we arrive at the classical harmonic oscillator model
\begin{equation}
	\ddot{x}(t) = M^{-1} K x(t) \text{ for } t \geq 0. \label{EQUATION_HARMONIC_OSCILLATOR}
\end{equation}

Assuming now that the restoring force is proportional to the value of the system at some past time $t - \tau$, 
Equation (\ref{EQUATION_MATERIAL_LAW}) is replaced with the relation
\begin{equation}
	F(t) = K x(t - \tau) \text{ for } t \geq 0, \label{EQUATION_MATERIAL_LAW_WITH_DELAY}
\end{equation}
where $\tau > 0$ is a time delay. Plugging Equation (\ref{EQUATION_MATERIAL_LAW_WITH_DELAY}) into (\ref{EQUATION_NEWTON_SECOND_LAW}) 
leads then to the linear harmonic oscillator equation with pure delay written as
\begin{equation}
	\ddot{x}(t) = M^{-1} K x(t - \tau) \text{ for } t \geq 0. \label{EQUATION_DELAY_OSCILLATOR}
\end{equation}
Problems similar to Equation (\ref{EQUATION_DELAY_OSCILLATOR}) also arise when modeling systems 
with distributed parameters such as general wave phenomena (cf. \cite{KhuPoAzi2013}).

Equations similar to (\ref{EQUATION_DELAY_OSCILLATOR}) are often referred to as delay or retarted differential equations.
After being transformed to a first order in time system on a Banach space $X$,
a general equation with constant delay can be written as
\begin{equation}
	\dot{u}(t) = H(t, u(t), u_{t}) \text{ for } t > 0, \quad
	u(0) = u^{0}, \quad u_{0} = \varphi.
	\label{EQUATION_DELAY_DIFFERENTIAL_EQUATION_GENERAL}
\end{equation}
Here, $\tau > 0$ is a fixed delay parameter,
$u_{t} := u(t + \cdot) \in L^{1}(-\tau, 0; X)$, $t \geq 0$, denotes the history variable,
$H$ is an $X$-valued operator defined on a subset of $[0, \infty) \times X \times L^{1}(-\tau, 0; X)$
and $u^{0} \in X$, $\varphi \in L^{1}(-\tau, 0; X)$ are appropriate initial data.
Equations of type (\ref{EQUATION_DELAY_DIFFERENTIAL_EQUATION_GENERAL}) have been intensively studied in the literature.
We refer the reader to the monographs by Els'gol'ts \& Norkin \cite{ElNo1973}
and Hale \& Lunel \cite{HaLu1993}
for a detailed treatment of Equations (\ref{EQUATION_DELAY_DIFFERENTIAL_EQUATION_GENERAL}) in finite-dimensional spaces $X$.
In contrast to this, results on Equation (\ref{EQUATION_DELAY_DIFFERENTIAL_EQUATION_GENERAL}) in infinite-dimensional spaces $X$ are less numerous.
A good overview can be found in the monograph of B\'{a}tkai \& Piazzera \cite{BaPia2005}.

Khusainov et al. considered in \cite{KhuAgDa1999} Equation (\ref{EQUATION_DELAY_DIFFERENTIAL_EQUATION_GENERAL}) in $\mathbb{R}^{n}$ with
\begin{align*}
	H(t, u(t), u_{t}) &= A_{1} u(t) + A_{2} u(t - \tau) \\
	&+ \big(u^{T}(t) \otimes b_{1}\big) u(t) + \big(u^{T}(t) \otimes b_{2}\big) u(t - \tau) + \big(u^{T}(t - \tau) \otimes b_{3}\big) u(t - \tau)\big)
\end{align*}
for symmetric matrices $A_{1}, A_{2} \in \mathbb{R}^{n \times n}$ and column vectors $b_{1}, b_{2}, b_{3} \in \mathbb{R}^{n}$
and proposed a rational Lyapunov function to study the asymptotic stability of solutions to this system.

In their work \cite{KhuAgKosKoj2000}, Khusainov, Agarwal et al. studied a modal, or spectrum, control problem for a linear delay equation on $\mathbb{R}^{n}$ reading as
\begin{equation}
	\dot{x}(t) = A x(t) + b u(t) \text{ for } t > 0
\end{equation}
with a feedback control $u(t) = \sum\limits_{j = 0}^{m} c_{j}^{T} x(t - j \tau)$ for some delay time $\tau > 0$ and parameter vectors $c_{j} \in \mathbb{R}^{n}$.
For canonical systems, they developed a method to compute the unknown parameters
such that the closed-loop system possesses the spectrum prescribed beforehand.
Under appropriate ``concordance'' conditions,
they were able to carry over their considerations for a rather broad class of non-canonical systems.

In the infite-dimensional situation, a rather general particular case of (\ref{EQUATION_DELAY_DIFFERENTIAL_EQUATION_GENERAL}) with $H(t, v, \psi) = A v + F(\psi)$
where $A$ generates a $C_{0}$-semigroup $(S(t))_{t \geq 0}$ on $X$ and $F$ is a nonlinear operator on $L^{2}(-\tau, 0; X)$
was studied by Travies \& Webb in their work \cite{TraWe1976}.
Under appropriate assumptions on $F$, they proved the integral equation corresponding to the weak formulation
of the delay equation given by
\begin{equation}
	u(t) = S(t) \varphi(0) + \int_{0}^{t} S(t - s) F(u_{s}) \mathrm{d}s \text{ for } t > 0
	\notag
\end{equation}
to possess a unique solution in $H^{1}_{\mathrm{loc}}(0, \infty; X)$.

Di Blasio et al. addressed in \cite{DiBlKuSi1983} a similar problem
\begin{equation}
	\dot{u}(t) = \big(A + B\big) u(t) L_{1} u(t - r) + L_{2} u_{t},
	\text{ for } t > 0, \quad u(0) = u^{0}, \quad u_{0} = \varphi
	\label{EQUATION_DELAY_DI_BLASIO}
\end{equation}
where $A$ generates a holomorphic $C_{0}$-semigroup on a Hilbert space $H$, $B$ is a perturbation of $A$
and $L_{1}, L_{2}$ are appropriate linear operators.
If $u^{0}$ and $\varphi$ possess a certain regularity,
they proved the existence of a unique strong solution in $H^{1}_{\mathrm{loc}}(0, \infty; X) \cap L^{2}_{\mathrm{loc}}\big(0, \infty; D(A)\big)$
by analyzing the $C_{0}$-semigroup inducing the the semiflow $t \mapsto (u(t), u_{t})$.
These results were elaborated on by Di Blasio et al. in \cite{DiBlKuSi1984}
leading to a generalization for the case of weighted and interpolation spaces and
including a desription of the associated infinitesimal generator.
Finally, the general $L^{p}$-case for $p \in (0, \infty)$ was investigated by Di Blasio in \cite{DiBl2003}.

Recently, in their work \cite{KhuPoRa2013}, Khusainov et al. proposed an explicit $L^{2}$-solution theory
for a non-homogeneous initial-boundary value problem for an isotropic heat equation with constant delay
\begin{equation}
	\begin{split}
		u_{t}(t, x) &=
		\partial_{i} \big(a_{ij}(x) \partial_{j} u(t, x)\big) + b_{i}(x) \partial_{i} u(t, x) + c(x) u(t, x) \\
		&+ \partial_{i} \big(\tilde{a}_{ij}(x) \partial_{j} u(t - \tau, x)\big) + \tilde{b}_{i}(x) \partial_{i} u(t - \tau, x) + \tilde{c}(x) u(t - \tau, x) + \\
		&+ f(t, x) \text{ for } (t, x) \in (0, \infty) \times \Omega, \\
		u(t, x) &= \gamma(t, x) \text{ for } (t, x) \in (0, \infty) \times \partial \Omega, \\
		u(0, x) &= u^{0}(x) \text{ for } x \in \Omega, \\
		u(t, x) &= \varphi(t, x) \text{ for } (t, x) \in (-\tau, 0) \times \Omega.
	\end{split}
	\notag
\end{equation}
where $\Omega \subset \mathbb{R}^{d}$ is a regular bounded domain
and the coefficient functions are appropriate.
Conditions assuring for exponential stability were also given.

Over the past decade, hyperbolic partial differential equations have attracted a considerable amound of attention, too.
In \cite{NiPi2006}, Nicaise \& Pignotti studied a homogeneous isotropic wave equation 
with an internal feedback with and without delay reading as
\begin{equation}
	\begin{split}
		\partial_{tt} u(t, x) - \triangle u(t, x) + a_{0} \partial_{t} u(t, x) + a \partial_{t} u(t - \tau, x) &= 0
		\text{ for } (t, x) \in (0, \infty) \times \Omega, \\
		u(t, x) &= 0 \text{ for } (t, x) \in (0, \infty) \times \Gamma_{0}, \\
		\frac{\partial u}{\partial \nu}(t, x) &= 0 \text{ for } (t, x) \in (0, \infty) \times \Gamma_{1}
	\end{split}
	\notag
\end{equation}
under usual initial conditions
where $\Gamma_{0}, \Gamma_{1} \subset \partial \Omega$ are relatively open in $\partial \Omega$
with $\bar{\Gamma}_{0} \cap \bar{\Gamma}_{1} = \emptyset$
and $\nu$ denotes the outer unit normal vector of a smooth bounded domain $\Omega \subset \mathbb{R}^{d}$.
They showed the problem to possess a unique global classical solution and proved the latter
to be exponentially stable if $a_{0} > a > 0$ or instable, otherwise.
These results have been carried over by Nicaise \& Pignotti \cite{NiPi2008}
and Nicaise et al. \cite{NiPiVa2011}
to the case time-varying internally distributed or boundary delays.

In \cite{KhuPoAzi2013}, Khusainov et al. considered a non-homogeneous initial-boundary value problem
for a one-dimensional wave equation with constant coefficients and a single constant delay
\begin{equation}
	\begin{split}
		\partial_{tt} u(t, x) &= a^{2} \partial_{xx} u(t - \tau, x) + b \partial_{x} u(t - \tau, x) + c u(t - \tau, x) \\
		&+ f(t, x) \text{ for } (t, x) \in (0, T) \times (0, l), \\
		u(t, x) &= \gamma(t, x) \text{ for } (t, x) \in (0, T) \times \{0, 1\}, \\
		u(0, x) &= u^{0}(x) \text{ for } x \in (0, 1), \\
		u(t, x) &= \varphi(t, x) \text{ for } t \in (-\tau, 0), x \in (0, 1).
	\end{split}
	\notag
\end{equation}
Under appropriate regularity and compatibility assumptions,
they proved the problem to possess a unique $C^{2}$-solution for any finite $T > 0$.
Their proof was based on extrapolation methods for $C_{0}$-semigroups and an explicit solution representation formula.

Recently, Khusainov \& Pokojovy presented in \cite{KhuPo2014} a Hilbert-space treatment of the initial-boundary value problem
for the equations of thermoelasticity with pure delay
\begin{equation}
	\begin{split}
		\partial_{tt} u(x, t) - a \partial_{xx} u(x, t - \tau) + b \partial_{x} \theta(x, t - \tau)  &= f(x, t) \text{ for } x \in \Omega, t > 0, \\
		\partial_{t} \theta(x, t) - c \partial_{xx} \theta(x, t - \tau) + d \partial_{tx} u(x, t - \tau) &= g(x, t) \text{ for } x \in \Omega, t > 0, \\
		u(0, t) = u(l, t) = 0, \quad \partial_{x} \theta(0, t) = \partial_{x} \theta(l, t) &= 0 \text{ for } t > 0, \\
		\phantom{\partial_{t}} u(x, 0) = u^{0}(x), \quad \phantom{\partial_{t}} u(x, t) &= u^{0}(x, t) \text{ for } x \in \Omega, t \in (-\tau, 0), \\
		\partial_{t} u(x, 0) = u^{1}(x), \quad \partial_{t} u(x, t) &= u^{1}(x, t) \text{ for } x \in \Omega, t \in (-\tau, 0), \\
		\phantom{\partial_{t}} \theta(x, 0) = \theta^{0}(x), \quad \phantom{\partial_{t}} \theta(x, t) &= \theta^{0}(x, t) \text{ for } x \in \Omega, t \in (-\tau, 0).
	\end{split}
	\notag
\end{equation}
Their proof exploited extrapolation techniques for strongly continuous semigroups
and an explicit solution representation formula.

In the present paper, we give a Banach space solution theory for Equation (\ref{EQUATION_DELAY_OSCILLATOR})
subject to appropriate initial conditions.
Our approach is solely based on the step method and does not incorporate any semigroup techniques.
In contrast to earlier works by Khusainov et al. \cite{KhuDiRuLu2008, KhuIvaKo2006, KhuPoAzi2013},
we only require the invertibility and not the positivity of $M^{-1} K$ in Equation (\ref{EQUATION_DELAY_OSCILLATOR}).

In Section \ref{SECTION_CLASSICAL_HARMONIC_OSCILLATOR},
we briefly outline some seminal results on second-order abstract Cauchy problems.
In our main Section \ref{SECTION_CLASSICAL_HARMONIC_OSCILLATOR_WITH_PURE_DELAY},
we prove the existence and uniqueness of solutions to the Cauchy problem for the delay equation (\ref{EQUATION_DELAY_OSCILLATOR})
as well as their continuous dependence on the data.
Next, we give an explicit solution representation formula in a closed form
based on the delayed exponential function introduced by Khusainov \& Shuklin in \cite{KhuShu2005}.
Finally, we prove the solution of the delay equation
to converge to the solution of the original second order abstract differential equation
as the delay parameter $\tau$ goes to zero.

\section{Classical harmonic oscillator}
	\label{SECTION_CLASSICAL_HARMONIC_OSCILLATOR}
	For the sake of completeness, 
	we briefly discuss the initial value problem for the harmonic oscillator being a second order in time abstact differential equation
	\begin{equation}
		\ddot{x}(t) - \Omega^{2} x(t) = f(t) \text{ for } t \geq 0
		\label{EQUATION_HARMONIC_OSCILLATOR_GENERAL}
	\end{equation}
	subject to the initial conditions
	\begin{equation}
		x(0) = x_{0} \in D(\Omega), \quad \dot{x}(0) = x_{1} \in X.
		\label{EQUATION_HARMONIC_OSCILLATOR_GENERAL_IC}
	\end{equation}
	Here, we assume the linear operator $\Omega \colon D(\Omega) \subset X \to X$
	to be continuously invertible and generate a $C_{0}$-group $(e^{t\Omega})_{t \in \mathbb{R}} \subset L(X)$ 
	on a (real or complex) Banach space $X$ with
	$L(X)$ denoting the space of bounded, linear operators on $X$ equipped with the norm
	$\|A\|_{L(X)} := \sup\big\{\|Ax\|_{X} \;|\; x \in X, \|x\|_{X} \leq 1\big\}$.
	A more rigorous treatment of this problem can be found in \cite[Section 3.14]{ArBaHieNeu2001}.
	
	The general solution to the homogeneous equation is known to read as
	\begin{equation}
		x_{h}(t) = e^{\Omega t} c_{1} + e^{-\Omega t} c_{2} \text{ for } t \geq 0 \notag
	\end{equation}
	with some $c_{1}, c_{2} \in D(\Omega)$.
	Vectors $c_{1}, c_{2}$ can be computed using the initial conditions from Equation (\ref{EQUATION_HARMONIC_OSCILLATOR_GENERAL_IC}) 
	leading to a system of linear operator equations
	\begin{equation}
		c_{1} + c_{2} = x_{0}, \quad \Omega c_{1} - \Omega c_{2} = x_{1}. \notag
	\end{equation}
	The latter is uniquely solved by
	\begin{equation}
		c_{1} = \tfrac{1}{2} \Omega^{-1} (\Omega x_{0} + x_{1}), \quad c_{1} = \tfrac{1}{2} \Omega^{-1} (\Omega x_{0} - x_{1}). \notag
	\end{equation}
	Thus, the unique solution of the homogeneous equation with the initial conditions (\ref{EQUATION_HARMONIC_OSCILLATOR_GENERAL_IC}) is given by
	\begin{equation}
		x_{h}(t) = \tfrac{1}{2} \Omega^{-1} e^{\Omega t} (\Omega x_{0} + x_{1}) + \tfrac{1}{2} \Omega^{-1} e^{-\Omega t} (\Omega x_{0} - x_{1})
		\text{ for } t \geq 0
	\end{equation}
	or, equivalently,
	\begin{equation}
		x_{h}(t) = \tfrac{1}{2} (e^{\Omega t} + e^{-\Omega t}) x_{0} + \tfrac{1}{2} \Omega^{-1} (e^{\Omega t} - e^{-\Omega t}) x_{1}
		\text{ for } t \geq 0.
		\label{EQUATION_HARMONIC_OSCILLATOR_GENERAL_SOLUTION_HOMOGENEOUS_EQUATION}
	\end{equation}
	A particular solution to the non-homogeneous equation with zero initial conditions will be determined in the Cauchy form
	\begin{equation}
		x_{p}(t) = \int_{0}^{t} K(t, s) f(s) \mathrm{d}s \text{ for } t \geq 0.
		\label{EQUATION_HARMONIC_OSCILLATOR_GENERAL_SOLUTION_NONHOMOGENEOUS_EQUATION_ANSATZ}
	\end{equation}
	We refer the reader to \cite[Chapter 1]{ArBaHieNeu2001} for the definition of Bochner integrals for $X$-valued functions.
	In Equation (\ref{EQUATION_HARMONIC_OSCILLATOR_GENERAL_SOLUTION_NONHOMOGENEOUS_EQUATION_ANSATZ}),
	the function $K \in C^{0}([0, \infty) \times [0, \infty), L(X))$ is the Cauchy kernel, i.e., 
	for any fixed $s \geq 0$, the function $K(\cdot, s)$ is the solution of the homogeneous problem satisfying the initial conditions
	\begin{equation}
		K(t, s)\big|_{t = s} = 0_{L(X)}, \quad
		\partial_{t} K(t, s)\big|_{t = s} = \mathrm{id}_{X}. \notag
	\end{equation}
	Using the ansatz
	\begin{equation}
		K(t, s) = e^{\Omega t} c_{1}(s) + e^{-\Omega t} c_{2}(s) \text{ for } t, s \geq 0 \notag
	\end{equation}
	for some $c_{1}, c_{2} \in C^{1}([0, \infty), L(X))$
	and taking into account the initial conditions, we arrive at
	\begin{equation}
		K(t, s)\big|_{t = s} = e^{\Omega t} c_{1}(s) + e^{-\Omega t} c_{2}(s) = 0_{L(X)}, \quad
		\partial_{t} K(t, s)\big|_{t = s} = \Omega e^{\Omega s} c_{1}(s) - \Omega e^{-\Omega s} c_{2}(s) = \mathrm{id}_{X}. \notag
	\end{equation}
	Solving this system with generalized Cramer's rule, we obtain for $s \geq 0$
	\begin{equation}
		\begin{split}
			c_{1}(s) &=
			\left(\det{}_{L(X)}\begin{pmatrix}
				e^{\Omega s} & e^{-\Omega s} \\
				\Omega e^{\Omega s} & -\Omega e^{-\Omega s}
			\end{pmatrix}\right)^{-1}
			\det{}_{L(X)}\begin{pmatrix}
				0_{L(X)} & e^{-\Omega s} \\
				\mathrm{id}_{X} & -\Omega e^{-\Omega s}
			\end{pmatrix}
			= \tfrac{1}{2} \Omega^{-1} e^{-\Omega s}, \\
			c_{2}(s) &=
			\left(\det{}_{L(X)}\begin{pmatrix}
				e^{\Omega s} & e^{-\Omega s} \\
				\Omega e^{\Omega s} & -\Omega e^{-\Omega s}
			\end{pmatrix}\right)^{-1}
			\det{}_{L(X)}\begin{pmatrix}
				e^{\Omega s} & 0_{L(X)} \\
				\Omega e^{\Omega s} & \mathrm{id}_{X}
			\end{pmatrix}
			= \tfrac{1}{2} \Omega^{-1} e^{-\Omega s}.
		\end{split}
		\notag
	\end{equation}
	Thus, the Cauchy kernel is given by
	\begin{equation}
		K(t, s) = \tfrac{1}{2} \Omega^{-1} (e^{\Omega(t - s)} - e^{-\Omega(t - s)}) \text{ for } t, s \geq 0, \notag
	\end{equation}
	whereas the particular solution satisfying zero initial conditions reads as
	\begin{equation}
		x_{p}(t) = \frac{1}{2} \Omega^{-1} \int_{0}^{t} (e^{\Omega (t - s)} - e^{-\Omega (t - s)}) f(s) \mathrm{d}s \text{ for } t \geq 0. \notag
	\end{equation}
	
	Hence, for $x_{0} \in D(\Omega)$, $x_{1} \in X$ and $f \in L^{1}_{\mathrm{loc}}(0, \infty; X)$,
	the unique mild solution $x \in W^{1, 1}_{\mathrm{loc}}(0, \infty; X)$
	to the Cauchy problem (\ref{EQUATION_HARMONIC_OSCILLATOR_GENERAL})--(\ref{EQUATION_HARMONIC_OSCILLATOR_GENERAL_IC}) 
	can be written as
	\begin{equation}
		\begin{split}
			x(t) &= \tfrac{1}{2} (e^{\Omega t} + e^{-\Omega t}) x_{0} +
			\tfrac{1}{2} \Omega^{-1} (e^{\Omega t} - e^{-\Omega t}) x_{1} \\
			&+ \tfrac{1}{2} \Omega^{-1} \int_{0}^{t} (e^{\Omega (t - s)} - e^{-\Omega (t - s)}) f(s) \mathrm{d}s
			\text{ for } t \geq 0.
		\end{split}
		\label{EQUATION_HARMONIC_OSCILLATOR_GENERAL_EXPLICIT_SOLUTION}
	\end{equation}
	If the data additionally satisfy $x_{0} \in D(\Omega^{2})$, $x_{1} \in D(\Omega)$ and 
	$f \in W^{1, 1}_{\mathrm{loc}}(0, \infty; X) \cup C^{0}\big([0, \infty), D(\Omega^{2})\big)$,
	then the mild solution $x$ given in Equation (\ref{EQUATION_HARMONIC_OSCILLATOR_GENERAL_EXPLICIT_SOLUTION})
	is a classical solution satisfying
	$x \in C^{2}\big([0, \infty), X\big) \cap C^{1}\big([0, \infty), D(\Omega)\big) \cap C^{0}\big([0, \infty), D(\Omega^{2})\big)$.

\section{The linear oscillator with pure delay}
	\label{SECTION_CLASSICAL_HARMONIC_OSCILLATOR_WITH_PURE_DELAY}
	In this section, we consider a Cauchy problem for the linear oscillator with a single pure delay
	\begin{equation}
		\ddot{x}(t) - \Omega^{2} x(t - 2\tau) = f(t) \text{ for } t \geq 0
		\label{EQUATION_LINEAR_OSCILLATOR_WITH_DELAY_GENERAL}
	\end{equation}
	subject to the initial condition
	\begin{equation}
		x(t) = \varphi(t) \text{ for } t \in [-2\tau, 0].
		\label{EQUATION_LINEAR_OSCILLATOR_WITH_DELAY_GENERAL_IC}
	\end{equation}
	Here, $X$ is a Banach space, $\Omega \in L(X)$ is a bounded, linear operator and 
	$\varphi \in C^{1}\big([-2\tau, 0], X\big)$, $f \in L^{1}_{\mathrm{loc}}(0, \infty; X)$ are given functions.
	In contrast to Section \ref{SECTION_CLASSICAL_HARMONIC_OSCILLATOR},
	the boundedness of $\Omega$ is indespensable here.
	Indeed, Dreher et al. proved in \cite{DreQuiRa2009}
	that Equations (\ref{EQUATION_LINEAR_OSCILLATOR_WITH_DELAY_GENERAL})--(\ref{EQUATION_LINEAR_OSCILLATOR_WITH_DELAY_GENERAL_IC})
	are ill-posed even if $X$ is a Hilbert space and $\Omega$ possesses a sequence of eigenvalues $(\lambda_{n})_{n \in \mathbb{N}} \subset \mathbb{R}$
	with $\lambda_{n} \to \infty$ or $\lambda_{n} \to -\infty$ as $n \to \infty$.
	The necessity for $\Omega$ being bounded has also been pointed out by Rodrigues et al. in \cite{RoWu2007}
	when treating a linear heat equation with pure delay.

	\begin{definition}
		A function $x \in C^{1}\big([-2\tau, \infty), X\big) \cap C^{2}\big([-2\tau, 0], X\big) \cap C^{2}\big([0, \infty), X\big)$
		satisfying Equations (\ref{EQUATION_LINEAR_OSCILLATOR_WITH_DELAY_GENERAL})--(\ref{EQUATION_LINEAR_OSCILLATOR_WITH_DELAY_GENERAL_IC})
		pointwise is called a classical solution to the Cauchy problem (\ref{EQUATION_LINEAR_OSCILLATOR_WITH_DELAY_GENERAL})--(\ref{EQUATION_LINEAR_OSCILLATOR_WITH_DELAY_GENERAL_IC}).
	\end{definition}

	A mild formulation of (\ref{EQUATION_LINEAR_OSCILLATOR_WITH_DELAY_GENERAL})--(\ref{EQUATION_LINEAR_OSCILLATOR_WITH_DELAY_GENERAL_IC}) is given by
	\begin{align}
		\dot{x}(t) &= \dot{x}(0) + \Omega^{2} \int_{0}^{t} x(s - 2\tau) \mathrm{d}s + \int_{0}^{t} f(s) \mathrm{d}s \text{ for } t \geq 0, \label{EQUATION_LINEAR_OSCILLATOR_WITH_DELAY_MILD_FORMULATION} \\
		x(t) &= \varphi(t) \text{ for } t \in [-2\tau, 0]. \label{EQUATION_LINEAR_OSCILLATOR_WITH_DELAY_MILD_FORMULATION_IC}
	\end{align}
	
	\begin{definition}
		A function $x \in C^{1}\big([-2\tau, \infty), X\big)$ satisfying Equations (\ref{EQUATION_LINEAR_OSCILLATOR_WITH_DELAY_MILD_FORMULATION})--(\ref{EQUATION_LINEAR_OSCILLATOR_WITH_DELAY_MILD_FORMULATION_IC})
		is called a mild solution to the Cauchy problem (\ref{EQUATION_LINEAR_OSCILLATOR_WITH_DELAY_GENERAL})--(\ref{EQUATION_LINEAR_OSCILLATOR_WITH_DELAY_GENERAL_IC}).
	\end{definition}

	By the virtue of fundamental theorem of calculus,
	any mild solution $x$ to (\ref{EQUATION_LINEAR_OSCILLATOR_WITH_DELAY_GENERAL})--(\ref{EQUATION_LINEAR_OSCILLATOR_WITH_DELAY_GENERAL_IC})
	with $x \in C^{1}\big([-2\tau, \infty), X\big) \cap C^{2}\big([-2\tau, 0], X\big) \cap C^{2}\big([0, \infty), X\big)$
	is also a classical solution.
	Obviously, for the problem (\ref{EQUATION_LINEAR_OSCILLATOR_WITH_DELAY_GENERAL})--(\ref{EQUATION_LINEAR_OSCILLATOR_WITH_DELAY_GENERAL_IC})
	to possess a classical solution, one necessarily requires $\varphi \in C^{2}\big([-2\tau, 0], X\big)$.

	In the following subsection,
	we want to study the existence and uniquess of mild and classical solutions to the Cauchy problem (\ref{EQUATION_LINEAR_OSCILLATOR_WITH_DELAY_GENERAL})--(\ref{EQUATION_LINEAR_OSCILLATOR_WITH_DELAY_GENERAL_IC})
	as well as their continuous dependence on the data.

	\subsection{Existence and uniqueness}
	Rather then using the semigroup approach (cf. \cite[Chapter 2]{HaLu1993}),
	we decided to use the more straightforward step method here
	reducing (\ref{EQUATION_LINEAR_OSCILLATOR_WITH_DELAY_MILD_FORMULATION})--(\ref{EQUATION_LINEAR_OSCILLATOR_WITH_DELAY_MILD_FORMULATION_IC})
	to a difference equation on the functional vector space $\hat{C}^{1}_{2\tau}(\mathbb{N}_{0}, X)$ defined as follows.
	\begin{definition}
		Let $X$ be a Banach space, $\tau > 0$ and $s \in \mathbb{N}_{0}$.
		We introduce the metric vector space
		\begin{equation}
			\begin{split}
				\hat{C}^{s}_{\tau}(\mathbb{N}_{0}, X) &:= l^{\infty}_{\mathrm{loc}}\big(\mathbb{N}_{0}, C^{s}\big([-\tau, 0], X)\big)\big) \\
				&:= \Big\{x = (x_{n})_{n \in \mathbb{N}_{0}} \,\big|\, x_{n} \in C^{s}\big([-\tau, 0], X\big) \text{ for } n \in \mathbb{N}_{0}, \\
				&\phantom{:= \Big\{x = (x_{n})_{n \in \mathbb{N}_{0}} \,\big|\,} \frac{\mathrm{d}^{j}}{\mathrm{d}t^{j}} x_{n}(-\tau) = \frac{\mathrm{d}^{j}}{\mathrm{d}t^{j}} x_{n-1}(0) \text{ for } j = 0, \dots, s - 1, n \in \mathbb{N}\Big\}
			\end{split}
			\notag
		\end{equation}
		equipped with the distance function
		\begin{equation}
			d_{\hat{C}_{\tau}^{s}(\mathbb{N}_{0}, X)}(x, y) :=
			\sum_{n \in N} 2^{-n}
			\frac{\max\limits^{}_{k = 0, \dots, n} \|x_{k} - y_{k}\|_{C^{s}([-\tau, 0], X)}}{1 + \max\limits^{}_{k = 0, \dots, n} \|x_{k} - y_{k}\|_{C^{s}([-\tau, 0], X)}}
			\text{ for } x, y \in \hat{C}_{\tau}^{s}(\mathbb{N}_{0}, X). \notag
		\end{equation}
	\end{definition}
	Obviously, $\hat{C}^{s}_{\tau}(\mathbb{N}_{0}, X)$ is a complete metric space which is isometrically isomorphic 
	to the metric space $C^{s}_{\tau}\big([-\tau, \infty), X\big) := C^{s}\big([-\tau, \infty), X\big)$ equipped with the distance
	\begin{equation}
		d_{C^{s}_{\tau}([0, \infty), X)}(x, y) :=
		\sum_{n \in N} 2^{-n}
		\frac{\|x - y\|_{C^{s}([-\tau, \tau n], X)}}
		{1 + \|x - y\|_{C^{s}([-\tau, \tau n], X)}} \text{ for } x, y \in C^{s}\big([-\tau, \infty), X\big). \notag
	\end{equation}

	For any $x \colon [-\tau, \infty) \to X$,
	we define for $n \in \mathbb{N}_{0}$ the $n$-th segment of $x$ by means of
	\begin{equation}
		x_{n} := x(n\tau + s) \text{ for } s \in [-\tau, 0]. \notag
	\end{equation}
	By induction, $x$ is a mild solution of (\ref{EQUATION_LINEAR_OSCILLATOR_WITH_DELAY_GENERAL})--(\ref{EQUATION_LINEAR_OSCILLATOR_WITH_DELAY_GENERAL_IC})
	if and only if $(x_{n})_{n \in \mathbb{N}_{0}} \in \hat{C}^{1}_{2\tau}(\mathbb{N}_{0}, X)$ solves
	\begin{equation}
		\begin{split}
			\dot{x}_{n}(s) &= \dot{x}_{n-1}(0) + \Omega^{2} x_{n-1}(s) + \int_{2(n-1)\tau}^{2(n-1)\tau + s} f(\sigma) \mathrm{d}\sigma \text{ for } s \in [-2\tau, 0], n \in \mathbb{N}, \\
			x_{0}(s) &= \varphi(s) \text{ for } s \in [-2\tau, 0].
		\end{split}
		\label{EQUATION_DIFFERENCE_EQUATION_FOR_DOT_X_N}
	\end{equation}

	\begin{theorem}
		\label{THEOREM_MILD_SOLUTION_DISCRETE}
		Equation (\ref{EQUATION_DIFFERENCE_EQUATION_FOR_DOT_X_N}) has a unique solution $(x_{n})_{n \in \mathbb{N}_{0}} \in \hat{C}^{1}_{2\tau}(\mathbb{N}_{0}, X)$.
		Moreover, $x$ continuously depends on the data in sense of the estimate
		\begin{equation}
			\|x_{n}\|_{C^{1}([-2\tau, 0], X)} \leq
			\kappa^{n} \Big(\|\varphi\|_{C^{1}([-2\tau, 0], X)} + \|f\|_{L^{1}(0, 2\tau n;  X)}\Big)
			\text{ for any } n \in \mathbb{N} \notag
		\end{equation}
		with $\kappa := 1 + (1 + 2\tau) \big(1 + \|\Omega\|_{L(X)}^{2}\big)$.
	\end{theorem}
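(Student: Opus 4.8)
The plan is to solve (\ref{EQUATION_DIFFERENCE_EQUATION_FOR_DOT_X_N}) by the step method, i.e.\ to read it as a recursion for the segments $x_{n}$ and to run an induction over $n$. For existence and uniqueness, the first line of (\ref{EQUATION_DIFFERENCE_EQUATION_FOR_DOT_X_N}) already prescribes $\dot{x}_{n}$ explicitly in terms of the previous segment $x_{n-1} \in C^{1}\big([-2\tau, 0], X\big)$ and of $f$: the summand $\dot{x}_{n-1}(0)$ is a constant vector in $X$, the map $s \mapsto \Omega^{2} x_{n-1}(s)$ is continuous because $\Omega \in L(X)$, and $s \mapsto \int_{2(n-1)\tau}^{2(n-1)\tau + s} f(\sigma)\,\mathrm{d}\sigma$ is (absolutely) continuous since $f \in L^{1}_{\mathrm{loc}}(0, \infty; X)$. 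Hence this right-hand side is a continuous $X$-valued function of $s$; integrating it and fixing the constant of integration by the compatibility requirement $x_{n}(-2\tau) = x_{n-1}(0)$ built into $\hat{C}^{1}_{2\tau}(\mathbb{N}_{0}, X)$ forces
\begin{equation}
	x_{n}(s) = x_{n-1}(0) + \int_{-2\tau}^{s} \Big( \dot{x}_{n-1}(0) + \Omega^{2} x_{n-1}(\sigma) + \int_{2(n-1)\tau}^{2(n-1)\tau + \sigma} f(\rho)\,\mathrm{d}\rho \Big)\,\mathrm{d}\sigma , \notag
\end{equation}
which indeed lies in $C^{1}\big([-2\tau, 0], X\big)$ and satisfies the required matching at $-2\tau$. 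Since $x_{n}$ is thereby uniquely determined by $x_{n-1}$ and $f$, and $x_{0} = \varphi$ is given, induction yields the unique solution $(x_{n})_{n \in \mathbb{N}_{0}} \in \hat{C}^{1}_{2\tau}(\mathbb{N}_{0}, X)$.

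For the stability estimate, I would bound $\|x_{n}\|_{C^{1}([-2\tau, 0], X)}$ segment by segment. The triangle inequality applied to the first line of (\ref{EQUATION_DIFFERENCE_EQUATION_FOR_DOT_X_N}), together with $\|\Omega^{2}\|_{L(X)} \leq \|\Omega\|_{L(X)}^{2}$, gives $\sup_{s}\|\dot{x}_{n}(s)\|_{X} \leq \|\dot{x}_{n-1}(0)\|_{X} + \|\Omega\|_{L(X)}^{2} \sup_{s}\|x_{n-1}(s)\|_{X} + \int_{I_{n}}\|f(\rho)\|_{X}\,\mathrm{d}\rho$, where $I_{n}$ is the interval (of length at most $2\tau$, contained in $[0, \infty)$, and empty for $n = 1$) over which $f$ enters the $n$-th equation; integrating $\dot{x}_{n}$ from $-2\tau$ and using $x_{n}(-2\tau) = x_{n-1}(0)$ gives $\sup_{s}\|x_{n}(s)\|_{X} \leq \|x_{n-1}(0)\|_{X} + 2\tau \sup_{s}\|\dot{x}_{n}(s)\|_{X}$. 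Adding the two and estimating $\|\dot{x}_{n-1}(0)\|_{X}$ and $\sup_{s}\|x_{n-1}(s)\|_{X}$ by $\|x_{n-1}\|_{C^{1}([-2\tau, 0], X)}$, one arrives at the one-step bound
\begin{equation}
	\|x_{n}\|_{C^{1}([-2\tau, 0], X)} \leq \kappa\, \|x_{n-1}\|_{C^{1}([-2\tau, 0], X)} + (1 + 2\tau) \int_{I_{n}} \|f(\rho)\|_{X}\,\mathrm{d}\rho , \notag
\end{equation}
with $\kappa$ exactly as in the statement, the point being that $\kappa = (1 + 2\tau) + \big(1 + (1 + 2\tau)\|\Omega\|_{L(X)}^{2}\big)$ dominates both coefficients $1 + (1 + 2\tau)\|\Omega\|_{L(X)}^{2}$ and $1 + 2\tau$ that arise in front of $\sup_{s}\|x_{n-1}(s)\|_{X}$ and $\sup_{s}\|\dot{x}_{n-1}(s)\|_{X}$, respectively.

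It then remains to iterate: since $\kappa \geq 1$ and $\|x_{0}\|_{C^{1}([-2\tau, 0], X)} = \|\varphi\|_{C^{1}([-2\tau, 0], X)}$, unrolling the one-step bound gives $\|x_{n}\|_{C^{1}([-2\tau, 0], X)} \leq \kappa^{n}\|\varphi\|_{C^{1}([-2\tau, 0], X)} + \kappa^{n-1}(1 + 2\tau)\sum_{k=1}^{n} \int_{I_{k}}\|f(\rho)\|_{X}\,\mathrm{d}\rho$; because the intervals $I_{k}$ are pairwise disjoint subsets of $[0, 2\tau n]$ the sum collapses to at most $\|f\|_{L^{1}(0, 2\tau n; X)}$, and because $1 + 2\tau \leq \kappa$ one may replace $\kappa^{n-1}(1 + 2\tau)$ by $\kappa^{n}$, producing precisely the asserted inequality. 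I expect no serious obstacle here: the step-method recursion and the geometric summation are standard, and the only real care is the bookkeeping — verifying that merely $L^{1}_{\mathrm{loc}}$ data still give $C^{1}$ segments lying in $\hat{C}^{1}_{2\tau}(\mathbb{N}_{0}, X)$, noticing that each step involves $f$ only on one slab of length $\leq 2\tau$ and that these slabs are disjoint, and checking that $\kappa$ was chosen exactly so as to absorb all the constants without leaving a spurious factor of $n$.
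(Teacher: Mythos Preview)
Your proposal is correct and follows essentially the same approach as the paper: both argue by the step method, integrate the recursion to obtain $x_{n}$ explicitly from $x_{n-1}$, derive separate sup-bounds for $x_{n}$ and $\dot{x}_{n}$ that combine into the one-step estimate $\|x_{n}\|_{C^{1}} \leq \kappa \|x_{n-1}\|_{C^{1}} + (1+2\tau)\|f\|_{L^{1}(I_{n})}$, and then iterate. You are in fact slightly more explicit than the paper about why the iterated inequality yields the factor $\kappa^{n}$ (the disjointness of the $I_{k}$ and the observation $1+2\tau \leq \kappa$), which the paper simply absorbs into the phrase ``by induction.''
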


	\begin{proof}
		By the virtue of the fundamental theorem of calculus, Equation (\ref{EQUATION_DIFFERENCE_EQUATION_FOR_DOT_X_N}) is satisfied if and only if
		\begin{align}
			x_{n}(s) &= x_{n-1}(0) + (s - 2\tau) \dot{x}_{n-1}(0)
			+ \Omega^{2} \int_{-2\tau}^{s} x_{n-1}(\sigma) \mathrm{d}\sigma \\
			&+ \int_{-2\tau}^{s} \int_{2(n-1)\tau}^{2(n-1)\tau + \sigma} f(\xi) \mathrm{d}\xi \mathrm{d}\sigma \text{ for } s \in [-2\tau, 0], n \in \mathbb{N}, \label{EQUATION_DIFFERENCE_EQUATION_1} \\
			x_{n}(-2\tau) &= x_{n-1}(0), \quad \dot{x}_{n}(-2\tau) = \dot{x}_{n-1}(0) \text{ for } n \in \mathbb{N}, \label{EQUATION_DIFFERENCE_EQUATION_2} \\
			x_{0}(s) &= \varphi(s) \text{ for } s \in [-2\tau, 0]. \label{EQUATION_DIFFERENCE_EQUATION_3}
		\end{align}
		By induction, we can easily show that for any $n \in \mathbb{N}$
		there exists a unique local solution $(x_{0}, x_{1}, \dots, x_{n}) \in \Big(C^{1}\big([-2\tau, 0], X\big)\Big)^{n + 1}$
		to (\ref{EQUATION_DIFFERENCE_EQUATION_1})--(\ref{EQUATION_DIFFERENCE_EQUATION_3}) up to the index $n$.
		Here, we used the Sobolev embedding theorem stating
		\begin{equation}
			W^{1, 1}(0, T; X) \hookrightarrow C^{0}\big([0, T], X\big) \text{ for any } T > 0. \notag
		\end{equation}
		Further, we can estimate
		\begin{equation}
			\begin{split}
				\|x_{n}\|_{C^{0}([-2\tau, 0], X)} &\leq
				\Big(1 + 2\tau \big(1 + \|\Omega\|_{L(X)}^{2}\big)\Big) \|x_{n-1}\|_{C^{1}([-2\tau, 0], X)} \\
				&+ 2 \tau \|f\|_{L^{1}(2(n-1)\tau, 2n\tau; X)}.
			\end{split}
			\label{EQUATION_ESIMATE_FOR_X_N}
		\end{equation}
		Similarly, Equation (\ref{EQUATION_DIFFERENCE_EQUATION_FOR_DOT_X_N}) yields
		\begin{equation}
			\|\dot{x}_{n}\|_{C^{0}([-2\tau, 0], X)} \leq
			\big(1 + \|\Omega\|_{L(X)}^{2}\big) \|x_{n-1}\|_{C^{0}([-2\tau, 0], X)} + \|f\|_{L^{1}(2(n-1)\tau, 2n\tau; X)}.
			\label{EQUATION_ESIMATE_FOR_DOT_X_N}
		\end{equation}
		Equations (\ref{EQUATION_ESIMATE_FOR_X_N}) and (\ref{EQUATION_ESIMATE_FOR_DOT_X_N}) imply together
		\begin{align*}
			\|x_{n}\|_{C^{1}([-2\tau, 0], X)} &\leq
			\kappa \big(\|\varphi\|_{C^{1}([-2\tau, 0], X)} + \|f\|_{L^{1}(2(n-1)\tau, n\tau: X)}\big).
		\end{align*}
		By induction, we then get for any $n \in \mathbb{N}$
		\begin{equation}
			\|x_{n}\|_{C^{1}([-2\tau, 0], X)} \leq
			\kappa^{n} \big(\|\varphi\|_{C^{0}([-2\tau, 0], X)} + \|f\|_{L^{1}(0, 2\tau n, X)}\big) \notag
		\end{equation}
		which finishes the proof.
	\end{proof}

	Letting $x(t) := x_{k}(t - 2 (k + 1) \tau)$ for $t \geq 0$ and $k := \lfloor \tfrac{t}{2\tau}\rfloor \in \mathbb{N}_{0}$,
	we obtain the unique mild solution $x$ of Equations (\ref{EQUATION_LINEAR_OSCILLATOR_WITH_DELAY_GENERAL})--(\ref{EQUATION_LINEAR_OSCILLATOR_WITH_DELAY_GENERAL_IC}).
	\begin{corollary}
		\label{COROLLARY_MILD_SOLUTION}
		Equations (\ref{EQUATION_LINEAR_OSCILLATOR_WITH_DELAY_GENERAL})--(\ref{EQUATION_LINEAR_OSCILLATOR_WITH_DELAY_GENERAL_IC})
		possess a unique mild solution $x$ satisfying for any $T := 2n\tau$, $n \in \mathbb{N}$,
		\begin{equation}
			\|x\|_{C^{1}([-2\tau, T], X)} \leq
			\kappa^{n} \Big(\|\varphi\|_{C^{1}([-2\tau, T], X)} + \|f\|_{L^{1}(0, T;  X)}\Big)
			\text{ for any } n \in \mathbb{N}. \notag
		\end{equation}
		with $\kappa := 1 + (1 + 2\tau) \big(1 + \|\Omega\|_{L(X)}^{2}\big)$.
	\end{corollary}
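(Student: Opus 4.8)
The plan is to deduce Corollary~\ref{COROLLARY_MILD_SOLUTION} directly from Theorem~\ref{THEOREM_MILD_SOLUTION_DISCRETE} by transporting the discrete solution back to a genuine function on $[-2\tau, \infty)$. First I would invoke Theorem~\ref{THEOREM_MILD_SOLUTION_DISCRETE} to obtain the unique $(x_{n})_{n \in \mathbb{N}_{0}} \in \hat{C}^{1}_{2\tau}(\mathbb{N}_{0}, X)$ solving the difference equation~(\ref{EQUATION_DIFFERENCE_EQUATION_FOR_DOT_X_N}), and then define $x \colon [-2\tau, \infty) \to X$ by the gluing $x(t) := x_{k}(t - 2(k+1)\tau)$ with $k := \lfloor t/(2\tau)\rfloor$, exactly as indicated before the statement. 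This gluing is precisely the isometric isomorphism $\hat{C}^{1}_{2\tau}(\mathbb{N}_{0}, X) \cong C^{1}_{2\tau}\big([-2\tau, \infty), X\big)$ recorded above: the $C^{1}$-compatibility conditions $\frac{\mathrm{d}^{j}}{\mathrm{d}t^{j}} x_{n}(-2\tau) = \frac{\mathrm{d}^{j}}{\mathrm{d}t^{j}} x_{n-1}(0)$ for $j = 0, 1$ built into the definition of $\hat{C}^{1}_{2\tau}(\mathbb{N}_{0}, X)$ guarantee that consecutive pieces agree together with their first derivatives at every node $2n\tau$, so that indeed $x \in C^{1}\big([-2\tau, \infty), X\big)$.

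Next I would use the segmentwise equivalence established just before Theorem~\ref{THEOREM_MILD_SOLUTION_DISCRETE}: a function is a mild solution of~(\ref{EQUATION_LINEAR_OSCILLATOR_WITH_DELAY_GENERAL})--(\ref{EQUATION_LINEAR_OSCILLATOR_WITH_DELAY_GENERAL_IC}) if and only if its segment sequence lies in $\hat{C}^{1}_{2\tau}(\mathbb{N}_{0}, X)$ and solves~(\ref{EQUATION_DIFFERENCE_EQUATION_FOR_DOT_X_N}). Applied to the $x$ just constructed this shows it is a mild solution. Conversely, any mild solution gives rise to a segment sequence solving~(\ref{EQUATION_DIFFERENCE_EQUATION_FOR_DOT_X_N}); hence uniqueness in Theorem~\ref{THEOREM_MILD_SOLUTION_DISCRETE}, together with injectivity of the segmentation map $x \mapsto (x_{n})_{n}$, yields uniqueness of the mild solution.

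For the stability estimate, fix $T = 2n\tau$. The intervals $[-2\tau, 0], [0, 2\tau], \dots, [2(n-1)\tau, 2n\tau]$ cover $[-2\tau, T]$, and on the $k$-th of them $x$ is a translate of $x_{k}$, with $x_{0} = \varphi$; since $x$ and $\dot{x}$ are continuous across the common endpoints, the $C^{1}$-norm over the union decomposes as $\|x\|_{C^{1}([-2\tau, T], X)} = \max_{0 \le k \le n} \|x_{k}\|_{C^{1}([-2\tau, 0], X)}$. Inserting the per-segment bound of Theorem~\ref{THEOREM_MILD_SOLUTION_DISCRETE}, namely $\|x_{k}\|_{C^{1}([-2\tau, 0], X)} \le \kappa^{k}\big(\|\varphi\|_{C^{1}([-2\tau, 0], X)} + \|f\|_{L^{1}(0, 2\tau k; X)}\big)$, and using that $\kappa \ge 1$ renders the right-hand side nondecreasing in $k$, the maximum over $0 \le k \le n$ is bounded by $\kappa^{n}\big(\|\varphi\|_{C^{1}([-2\tau, 0], X)} + \|f\|_{L^{1}(0, T; X)}\big)$, which is the asserted inequality (with $\|\varphi\|_{C^{1}([-2\tau, T], X)}$ understood as $\|\varphi\|_{C^{1}([-2\tau, 0], X)}$, since $\varphi$ is only given on $[-2\tau, 0]$).

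I do not anticipate a genuine analytic obstacle: all the substance is already in Theorem~\ref{THEOREM_MILD_SOLUTION_DISCRETE}, and what remains is that the isometric isomorphism preserves membership in the solution set and that a $C^{1}$-norm over a finite union of adjacent intervals equals the maximum of the norms over the pieces once the data match at the nodes. The one point requiring care is the index bookkeeping — verifying that the floor-function shift $t \mapsto x_{\lfloor t/(2\tau)\rfloor}(t - 2(k+1)\tau)$ is consistent with the definition of the $n$-th segment and that at each matching point $t = 2k\tau$ the two candidate values, and their first derivatives, coming from $x_{k}$ and $x_{k+1}$ coincide, which is exactly the content of the compatibility relations defining $\hat{C}^{1}_{2\tau}(\mathbb{N}_{0}, X)$.
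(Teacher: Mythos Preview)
Your proposal is correct and follows exactly the approach the paper intends: the paper gives no separate proof for this corollary but simply records the gluing formula $x(t) := x_{k}(t - 2(k+1)\tau)$, $k := \lfloor t/(2\tau)\rfloor$, in the sentence preceding the statement and treats the estimate as an immediate transcription of the per-segment bound from Theorem~\ref{THEOREM_MILD_SOLUTION_DISCRETE}. Your write-up merely spells out what the paper leaves implicit---the $C^{1}$-matching at the nodes via the compatibility conditions in $\hat{C}^{1}_{2\tau}(\mathbb{N}_{0}, X)$, the bijection between mild solutions and segment sequences, and the monotonicity in $k$ coming from $\kappa \geq 1$---and your remark that $\|\varphi\|_{C^{1}([-2\tau, T], X)}$ must be read as $\|\varphi\|_{C^{1}([-2\tau, 0], X)}$ correctly identifies a typo in the stated inequality.
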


	\begin{theorem}
	      Under additional conditions $\varphi \in C^{2}\big([-2\tau, 0], X\big)$ and
	      $f \in C^{0}\big([0, \infty), X\big)$, the unique mild solution given in Corollary \ref{COROLLARY_MILD_SOLUTION}
	      is a classical solution.
	\end{theorem}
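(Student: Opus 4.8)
The plan is to upgrade the regularity of the mild solution segment by segment using the difference equation \eqref{EQUATION_DIFFERENCE_EQUATION_FOR_DOT_X_N}, and then check the matching conditions at the nodes $t = 2k\tau$. First I would recall that by Corollary \ref{COROLLARY_MILD_SOLUTION} we already have a unique mild solution $x \in C^{1}\big([-2\tau, \infty), X\big)$, equivalently a sequence $(x_{n})_{n \in \mathbb{N}_{0}} \in \hat{C}^{1}_{2\tau}(\mathbb{N}_{0}, X)$. To get a classical solution, per the Definition it suffices to show $x \in C^{2}\big([-2\tau, 0], X\big) \cap C^{2}\big([0, \infty), X\big)$; the first inclusion is precisely the hypothesis $\varphi \in C^{2}\big([-2\tau, 0], X\big)$, so the work is entirely on $[0, \infty)$.

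The core of the argument is an induction on $n \in \mathbb{N}$ showing $x_{n} \in C^{2}\big([-2\tau, 0], X\big)$. For the base case $n = 1$, look at \eqref{EQUATION_DIFFERENCE_EQUATION_FOR_DOT_X_N}: we have $\dot{x}_{1}(s) = \dot{x}_{0}(0) + \Omega^{2} x_{0}(s) + \int_{0}^{s} f(\sigma)\,\mathrm{d}\sigma$ (after the shift $s \mapsto s$ on $[-2\tau,0]$, with $x_{0} = \varphi$). The right-hand side is differentiable in $s$ because $x_{0} = \varphi \in C^{2}$ (so $\Omega^{2} x_{0} \in C^{1}$, using $\Omega \in L(X)$) and because $f \in C^{0}$ makes $s \mapsto \int f$ a $C^{1}$ function by the fundamental theorem of calculus for Bochner integrals. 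Hence $\ddot{x}_{1}(s) = \Omega^{2} \dot{x}_{0}(s) + f(\cdot)$ exists and is continuous, i.e. $x_{1} \in C^{2}\big([-2\tau,0], X\big)$. The inductive step is identical: assuming $x_{n-1} \in C^{2}\big([-2\tau,0], X\big)$, differentiating $\dot{x}_{n}(s) = \dot{x}_{n-1}(0) + \Omega^{2} x_{n-1}(s) + \int_{2(n-1)\tau}^{2(n-1)\tau + s} f(\sigma)\,\mathrm{d}\sigma$ gives $\ddot{x}_{n}(s) = \Omega^{2}\dot{x}_{n-1}(s) + f\big(2(n-1)\tau + s\big) \in C^{0}\big([-2\tau,0], X\big)$.

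It then remains to patch the segments into a genuinely $C^{2}$ function on $[0,\infty)$. The membership $(x_{n}) \in \hat{C}^{1}_{2\tau}(\mathbb{N}_{0}, X)$ already encodes matching of $x$ and $\dot{x}$ at each node, so $x \in C^{1}$; I must additionally verify that $\ddot{x}$ matches, i.e. $\ddot{x}_{n}(-2\tau) = \ddot{x}_{n-1}(0)$ for all $n \in \mathbb{N}$. Evaluate the formula for $\ddot{x}_{n}$ at $s = -2\tau$: $\ddot{x}_{n}(-2\tau) = \Omega^{2}\dot{x}_{n-1}(-2\tau) + f\big(2(n-1)\tau - 2\tau\big)$. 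Now use $\dot{x}_{n-1}(-2\tau) = \dot{x}_{n-2}(0)$ (the $\hat{C}^{1}$ node condition) and compare with the equation for $\ddot{x}_{n-1}(0) = \Omega^{2}\dot{x}_{n-2}(0) + f\big(2(n-2)\tau\big)$; since $2(n-1)\tau - 2\tau = 2(n-2)\tau$, the two expressions agree. For $n = 1$ one instead checks $\ddot{x}_{1}(-2\tau^{+}) $ against the left derivative coming from $\varphi$, i.e. one needs $\ddot{\varphi}(0^{-})$ to equal $\Omega^{2}\varphi(-2\tau) + f(0)$ — but this is automatic from the structure at $t=0$ only if a compatibility condition holds; since the equation $\ddot x(t) - \Omega^2 x(t-2\tau) = f(t)$ is required to hold pointwise only for $t \ge 0$, there is no constraint forcing $\ddot\varphi(0^-)$ to any particular value, and one simply reads $\ddot{x}(0^{+}) = \Omega^{2}\varphi(-2\tau) + f(0)$ from the equation, which defines the one-sided second derivative on $[0,\infty)$; continuity of $\ddot x$ across $t = 0$ from the right is all that the Definition's $C^{2}\big([0,\infty), X\big)$ demands. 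The main obstacle, then, is bookkeeping at $t = 0$: making precise that "classical solution" requires $C^{2}$ separately on $[-2\tau,0]$ and on $[0,\infty)$ (with one-sided derivatives at $0$), not across $0$, so that no spurious compatibility condition between $\varphi$ and $f$ is needed. Everything else is a routine application of the fundamental theorem of calculus for Bochner integrals and the boundedness of $\Omega$.
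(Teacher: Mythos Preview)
Your argument is correct, but the paper's proof is considerably shorter because it works globally rather than segment by segment. Since Corollary~\ref{COROLLARY_MILD_SOLUTION} already gives $x \in C^{1}\big([-2\tau,\infty),X\big)$, the paper simply differentiates the mild formulation (equivalently, Equation~\eqref{EQUATION_DIFFERENCE_EQUATION_FOR_DOT_X_N}) once more in $t$ to obtain $\ddot{x} = \Omega^{2} x(\cdot - 2\tau) + f$ on $[0,\infty)$; the right-hand side is manifestly in $C^{0}\big([0,\infty),X\big)$ because $x(\cdot - 2\tau) \in C^{1}\big([0,\infty),X\big)$ and $f \in C^{0}$, so $x \in C^{2}\big([0,\infty),X\big)$ in one stroke. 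No induction is needed, and the node-matching you carry out by hand is automatic---continuity of $\ddot{x}$ across each $t = 2k\tau$ is inherited from the continuity of $x(\cdot - 2\tau)$ and $f$ there. In fact, your induction hypothesis $x_{n-1} \in C^{2}$ is never actually used: to conclude $\ddot{x}_{n} \in C^{0}$ you only need $x_{n-1} \in C^{1}$, which is already part of the mild-solution regularity. Your route does make the piecewise structure more explicit, which could be useful if one wanted quantitative $C^{2}$-estimates segment by segment, but for the qualitative statement the paper's global differentiation is the cleaner path.
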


	\begin{proof}
		Differentiating Equation (\ref{EQUATION_DIFFERENCE_EQUATION_FOR_DOT_X_N}) with respect to $t$,
		using the assumptions and the fact that $x \in C^{1}\big([-2\tau, \infty), X\big)$,
		we deduce that $x|_{[-2\tau, 0]} \equiv \varphi \in C^{2}\big([-2\tau, 0], X\big)$ and
		\begin{equation}
		      \ddot{x} = \Omega^{2} x(\cdot - 2\tau) + f \in C^{0}\big([0, \infty), X\big). \notag
		\end{equation}
		Hence, $x \in C^{1}\big([-2\tau, \infty), X\big) \cap C^{2}\big([-2\tau, 0], X\big) \cap C^{2}\big([0, \infty), X\big)$
		and is thus a classical solution of Equations
		(\ref{EQUATION_LINEAR_OSCILLATOR_WITH_DELAY_GENERAL})--(\ref{EQUATION_LINEAR_OSCILLATOR_WITH_DELAY_GENERAL_IC}).
	\end{proof}

	\subsection{Explicit representation of solutions}
	\label{SECTION_HARMONIC_OSCILLATOR_WITH_DELAY_SOLUTION_REPRESENTATION} 
	Following Khusainov \& Shuklin \cite{KhuShu2005} and Khusainov et al. \cite{KhuPoRa2013}, 
	we define for $t \in \mathbb{R}$ the operator-valued delayed exponential function
	\begin{equation}
		\exp_{\tau}(t; \Omega) :=
		\left\{
		\begin{array}{cc}
			0_{L(X)}, & -\infty < t < -\tau, \\
			\mathrm{id}_{X}, & -\tau \leq t < 0, \\
			\mathrm{id}_{X} + \Omega \frac{t}{1!}, & 0 \leq t < \tau, \\
			\mathrm{id}_{X} + \Omega \frac{t}{1!} + \Omega^{2} \frac{(t - \tau)^{2}}{2!}, & \tau \leq t < 2\tau, \\
			\dots & \dots \\
			\mathrm{id}_{X} + \Omega \frac{t}{1!} + \Omega^{2} \frac{(t - \tau)^{2}}{2!} + \dots +
			\Omega^{k} \frac{\left(t - (k - 1) \tau\right)^{k}}{k!}, & (k - 1) \tau \leq t < k\tau, \\
			\dots & \dots.
		\end{array}
		\right.
		\label{EQUATION_DEFINITION_OF_DELAYED_EXPONENTIAL}
	\end{equation}

	Throughout this Section, we additionally assume that
	$\Omega \colon X \to X$ is an isomorphism from the Banach space $X$ onto itself.
	\begin{theorem}
		The delayed exponential function $\exp_{\tau}(\cdot; \Omega)$ lies in 
		$C^{0}\big([-\tau, \infty), X\big) \cap C^{1}\big([0, \infty), X\big) \cap C^{2}\big([\tau, \infty), X\big)$
		and solves the Cauchy problem
		\begin{align}
			\ddot{x}(t) - \Omega^{2} x(t - 2\tau) &= 0_{X} \text{ for } t \geq \tau, \label{EQUATION_DDE_FOR_DEXP_1} \\
			x(t) &= \varphi(t) \text{ for } t \in [-\tau, \tau] \label{EQUATION_DDE_FOR_DEXP_2}
		\end{align}
		where
		\begin{equation}
			\varphi(t) = \left\{
			\begin{array}{cc}
				\mathrm{id}_{X}, & -\tau \leq t < 0, \\
				\mathrm{id}_{X} + \Omega t, & 0 \leq t \leq \tau. 
			\end{array}
			\right. \notag
		\end{equation}
	\end{theorem}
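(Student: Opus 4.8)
The plan is to derive everything from the single first-order delay identity $\tfrac{\mathrm{d}}{\mathrm{d}t}\exp_{\tau}(t;\Omega)=\Omega\,\exp_{\tau}(t-\tau;\Omega)$ (which is the defining feature of the delayed exponential), to obtain the second-order equation \eqref{EQUATION_DDE_FOR_DEXP_1} by one further differentiation, and to settle the regularity by a direct inspection of the junction points $t=k\tau$. Throughout, I would abbreviate $E(t):=\exp_{\tau}(t;\Omega)$ and note that, since $\Omega\in L(X)$, on each interval $[(k-1)\tau,k\tau)$ the function $E$ coincides with the $L(X)$-valued polynomial $P_{k}(t):=\sum_{j=0}^{k}\Omega^{j}\frac{(t-(j-1)\tau)^{j}}{j!}$; in particular $E$ is $C^{\infty}$ on every open interval $\big((k-1)\tau,k\tau\big)$, so only the junctions require work. (The standing isomorphism assumption on $\Omega$ plays no role in this particular theorem.)

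For the regularity, the key observation I would use is that $P_{k+1}-P_{k}$ reduces to the single monomial $\Omega^{k+1}\frac{(t-k\tau)^{k+1}}{(k+1)!}$ --- the top term of $P_{k+1}$ carries the argument $t-\big((k+1)-1\big)\tau=t-k\tau$ --- which together with its first $k$ derivatives vanishes at $t=k\tau$. Hence $E$ is $C^{k}$ across $t=k\tau$ and, since the $(k+1)$-st derivative jumps there, no better. Specializing to $k=0,1,2$ and using that the relevant one-sided piece at a left endpoint is a polynomial, this yields $E\in C^{0}\big([-\tau,\infty),X\big)\cap C^{1}\big([0,\infty),X\big)\cap C^{2}\big([\tau,\infty),X\big)$; it also shows that $E$ fails to be $C^{1}$ at $t=0$ and fails to be $C^{2}$ at $t=\tau$, which is exactly why the differential equation can only be asserted from $t=\tau$ on.

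Next I would establish $\dot{E}(t)=\Omega\,E(t-\tau)$ for $t\ge 0$. On $\big((k-1)\tau,k\tau\big)$ with $k\ge 1$, differentiating $P_{k}$ term by term and substituting $i=j-1$ gives
\[
	\dot{E}(t)=\sum_{j=1}^{k}\Omega^{j}\frac{\big(t-(j-1)\tau\big)^{j-1}}{(j-1)!}=\Omega\sum_{i=0}^{k-1}\Omega^{i}\frac{\big((t-\tau)-(i-1)\tau\big)^{i}}{i!}=\Omega\,P_{k-1}(t-\tau),
\]
and since $t-\tau\in\big((k-2)\tau,(k-1)\tau\big)$ the right-hand side equals $\Omega\,E(t-\tau)$; as both sides of the identity are continuous on $[0,\infty)$ by the regularity just proved, it extends from the union of the open intervals to all of $[0,\infty)$. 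For $t\ge\tau$ one then has $t-\tau\ge0$, so $t\mapsto\Omega\,E(t-\tau)$ is $C^{1}$ there, and differentiating the first-order identity and applying it once more yields $\ddot{E}(t)=\Omega\,\dot{E}(t-\tau)=\Omega^{2}E(t-2\tau)$, which is \eqref{EQUATION_DDE_FOR_DEXP_1}; the initial condition \eqref{EQUATION_DDE_FOR_DEXP_2} is read off directly from \eqref{EQUATION_DEFINITION_OF_DELAYED_EXPONENTIAL}, since $E(t)=\mathrm{id}_{X}=\varphi(t)$ on $[-\tau,0)$ and $E(t)=\mathrm{id}_{X}+\Omega t=\varphi(t)$ on $[0,\tau]$. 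I expect the only mildly delicate point to be the junction bookkeeping --- pinning down the precise order of vanishing of the newly appearing monomial at each $t=k\tau$, hence the exact smoothness class on each half-line; the index shift in the first-order identity and the final differentiation are routine.
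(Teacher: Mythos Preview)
Your proposal is correct and follows essentially the same route as the paper: piecewise polynomial structure to check the junction regularity, the first-order identity $\dot{E}(t)=\Omega\,E(t-\tau)$ obtained by term-by-term differentiation, and one further differentiation to reach \eqref{EQUATION_DDE_FOR_DEXP_1}. Your justification of the $C^{k}$-matching at $t=k\tau$ via the single-monomial difference $P_{k+1}-P_{k}=\Omega^{k+1}\tfrac{(t-k\tau)^{k+1}}{(k+1)!}$ is in fact a bit more explicit than the paper's one-line assertion of the same fact.
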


	\begin{proof}
		To prove the smoothness of $x$, we first note that
		$x$ is an operator-valued polynomial and thus analytic on each of the intervals $[(k - 1) \tau, k \tau]$ for $k \in \mathbb{Z}$.
		By the definition of $\exp_{\tau}(\cdot; \Omega)$, we further find
		\begin{equation}
			\frac{\mathrm{d}^{j}}{\mathrm{d} t^{j}} x(k\tau - 0) = \frac{\mathrm{d}^{j}}{\mathrm{d} t^{j}} x(k\tau + 0) \text{ for } j = 0, \dots, k, \quad k \in \mathbb{N}_{0}. \notag
		\end{equation}
		Hence, $x \in C^{0}\big([-\tau, \infty), X\big) \cap C^{1}\big([0, \infty), X\big) \cap C^{2}\big([\tau, \infty), X\big)$.

		For $k \in \mathbb{N}$, $k \geq 2$, we have
		\begin{equation}
			x(t) = \mathrm{id}_{X} + \Omega \frac{t}{1!} + \Omega^{2} \frac{(t - \tau)^{2}}{2!} + \Omega^{3} \frac{(t - 3\tau)^{3}}{4!} +
			\Omega^{4} \frac{(t - 3\tau)^{4}}{4!} + \dots +
			\Omega^{k} \frac{\left(t - (k - 1) \tau\right)^{k}}{k!}. \notag
		\end{equation}
		For $t \geq \tau$, differentiation yields
		\begin{align*}
			\dot{x}(t) &=
			\Omega + \Omega^{2} \frac{t - \tau}{2!} + \Omega^{3} \frac{(t - 2\tau)^{2}}{4!} + \Omega^{4} \frac{(t - 3\tau)^{3}}{3!} + \dots +
			\Omega^{k} \frac{\left(t - (k - 1) \tau\right)^{k-1}}{(k - 1)!} \\
			&= \Omega \Big(
			\mathrm{id}_{X} + \Omega \frac{t - \tau}{2!} + \Omega^{2} \frac{(t - 2\tau)^{2}}{4!} + \Omega^{3} \frac{(t - 3\tau)^{3}}{3!} + \dots +
			\Omega^{k-1} \frac{\left(t - (k - 1) \tau\right)^{k-1}}{(k - 1)!}\Big) \\
			&= \Omega \exp_{\tau}(t - \tau; \Omega) = \Omega x(t - \tau) \notag
		\end{align*}
		and therefore
		\begin{align*}
			\ddot{x}(t) &= \Omega^{2} +
			\Omega^{3} \frac{t - 2\tau}{1!} + \Omega^{4} \frac{(t - 3\tau)^{2}}{2!} + \dots +
			\Omega^{k} \frac{\left(t - (k - 1) \tau\right)^{k-2}}{(k - 2)!} \\
			&= \Omega^{2} \Big(
			\mathrm{id}_{X} + \Omega \frac{t - 2\tau}{1!} + \Omega^{2} \frac{(t - 3\tau)^{2}}{2!} + \dots +
			\Omega^{k-2} \frac{\left(t - (k - 1) \tau\right)^{k-2}}{(k - 2)!}\Big) \\
			&= \Omega^{2} \exp_{\tau}(t - 2\tau; \Omega) = \Omega^{2} x(t - 2\tau).
		\end{align*}
		Hence, $x$ satisfies Equation (\ref{EQUATION_DDE_FOR_DEXP_1}).
		Finally, by the definition of $\exp_{\tau}(\cdot; \Omega)$,
		$x$ satisfies Equation (\ref{EQUATION_DDE_FOR_DEXP_2}), too.
	\end{proof}

	\begin{corollary}
		The delayed exponential function $\exp_{\tau}(\cdot; -\Omega)$ lies in $C^{0}([-\tau, \infty), X) \cap C^{1}\big([0, \infty), X\big) \cap C^{2}\big([\tau, \infty), X) \cap $
		and solves the Cauchy problem (\ref{EQUATION_DDE_FOR_DEXP_1})--(\ref{EQUATION_DDE_FOR_DEXP_2})
		with the initial data
		\begin{equation}
			\varphi(t) = \left\{
			\begin{array}{cc}
				\mathrm{id}_{X}, & -\tau \leq t < 0, \\
				\mathrm{id}_{X} - \Omega t, & 0 \leq t \leq \tau. 
			\end{array}
			\right. \notag
		\end{equation}
	\end{corollary}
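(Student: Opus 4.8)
The plan is to reduce the statement to the preceding theorem by a sign substitution. The crucial observation is that the delay equation~(\ref{EQUATION_DDE_FOR_DEXP_1}) depends on $\Omega$ only through $\Omega^{2}$, and $(-\Omega)^{2} = \Omega^{2}$; moreover, if $\Omega \colon X \to X$ is an isomorphism, then so is $-\Omega$. Hence I would simply invoke the preceding theorem with $\Omega$ replaced throughout by $-\Omega$, which is legitimate since $-\Omega \in L(X)$ and the running hypothesis of this subsection (that the operator be an isomorphism of $X$ onto itself) is clearly stable under negation.

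Carrying this out: by that theorem applied to $-\Omega$, the function $\exp_{\tau}(\cdot; -\Omega)$ belongs to $C^{0}\big([-\tau, \infty), X\big) \cap C^{1}\big([0, \infty), X\big) \cap C^{2}\big([\tau, \infty), X\big)$ and satisfies $\ddot{x}(t) - (-\Omega)^{2} x(t - 2\tau) = 0_{X}$ for $t \geq \tau$; since $(-\Omega)^{2} = \Omega^{2}$, this is exactly Equation~(\ref{EQUATION_DDE_FOR_DEXP_1}). The initial segment on $[-\tau, \tau]$ prescribed by the theorem for the operator $-\Omega$ is $\mathrm{id}_{X}$ for $-\tau \leq t < 0$ and $\mathrm{id}_{X} + (-\Omega) t = \mathrm{id}_{X} - \Omega t$ for $0 \leq t \leq \tau$, which is precisely the datum $\varphi$ claimed in the corollary. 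This establishes both assertions.

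There is essentially no obstacle here. If a self-contained argument were preferred, one could instead repeat verbatim the differentiation computation from the proof of the preceding theorem, using $(-1)^{k}\Omega^{k}$ in place of $\Omega^{k}$ in each coefficient of $\exp_{\tau}(t; -\Omega)$; the telescoping identities then become $\dot{x}(t) = -\Omega\, \exp_{\tau}(t - \tau; -\Omega)$ and $\ddot{x}(t) = \Omega^{2}\, \exp_{\tau}(t - 2\tau; -\Omega)$, and the rest goes through unchanged. But the substitution argument makes such a repetition superfluous.
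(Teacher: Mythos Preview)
Your proposal is correct and matches the paper's approach: the paper states this result as a corollary with no proof, so the intended argument is precisely the substitution $\Omega \mapsto -\Omega$ in the preceding theorem, which you have carried out cleanly (including the observation that $(-\Omega)^{2} = \Omega^{2}$ leaves Equation~(\ref{EQUATION_DDE_FOR_DEXP_1}) unchanged).
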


	We define the functions
	\begin{equation}
		\begin{split}
			x_{1}(t; \Omega) &:= \frac{1}{2} \big(\exp_{\tau}(t; \Omega) + \exp_{\tau}(t; -\Omega)\big) \text{ for } t \geq -\tau, \\
			x_{2}(t; \Omega) &:= \frac{1}{2} \Omega^{-1} \big(\exp_{\tau}(t; \Omega) - \exp_{\tau}(t; -\Omega)\big) \text{ for } t \geq -\tau. 
		\end{split}
		\label{EQUATION_FUNCTIONS_X_1_AND_X_2}
	\end{equation}
	From Equation (\ref{EQUATION_DEFINITION_OF_DELAYED_EXPONENTIAL}), we explicitly obtain
	\begin{equation}
		x_{1}(t; \Omega) = \left\{
		\begin{array}{cc}
			\mathrm{id}_{X}, & -\tau \leq t < \tau, \\
			\mathrm{id}_{X} + \Omega^{2} \frac{(t - \tau)^{2}}{2!}, & \tau \leq t < 3\tau, \\
			\mathrm{id}_{X} + \Omega^{2} \frac{(t - \tau)^{2}}{2!} + \Omega^{4} \frac{(t - 3\tau)^{4}}{4!}, & 3\tau \leq t < 5\tau, \\
			\dots & \dots \\
			\mathrm{id}_{X} + \Omega^{2} \frac{(t - \tau)^{2}}{2!} + \dots +
			\Omega^{2k} \frac{(t - (2k - 1) \tau)^{2k}}{(2k)!}, & (2k - 1) \tau \leq t < (2k + 1) \tau
		\end{array}\right.
		\notag
	\end{equation}
	and
	\begin{equation}
		x_{2}(t; \Omega) = \left\{
		\begin{array}{cc}
			0_{L(X)}, & -\tau \leq t < 0, \\
			\mathrm{id}_{X} \frac{t}{1!}, & 0 \leq t < 2\tau, \\
			\mathrm{id}_{X} \frac{t}{1!} + \Omega^{2} \frac{(t - 2\tau)^{3}}{3!}, & 2\tau \leq t < 4\tau, \\
			\mathrm{id}_{X} \frac{t}{1!} + \Omega^{2} \frac{(t - 2\tau)^{3}}{3!} +
			\Omega^{4} \frac{(t - 4\tau)^{5}}{5!}, & 4\tau \leq t < 6\tau, \\
			\dots & \dots \\
			\mathrm{id}_{X} \frac{t}{1!} + \Omega^{2} \frac{(t - 2\tau)^{3}}{3!} + \dots +
			\Omega^{2k} \frac{(t - (2k) \tau)^{2k+1}}{(2k + 1)!}, & 2k \tau \leq t < 2(k + 1) \tau.
		\end{array}\right.
		\notag
	\end{equation}
	Obviously, $x_{1}$ and $x_{2}$ are even functions with respect to $\Omega$.
	Figure \ref{FIGURE_FUNCTIONS_X1_AND_X2} displays the functions $x_{1}(\cdot; \Omega)$ and $x_{2}(\cdot; \Omega)$ for various values of $\tau$ and $\Omega$.
	\begin{figure}[h!]
		\centering
		\includegraphics[scale = 0.4]{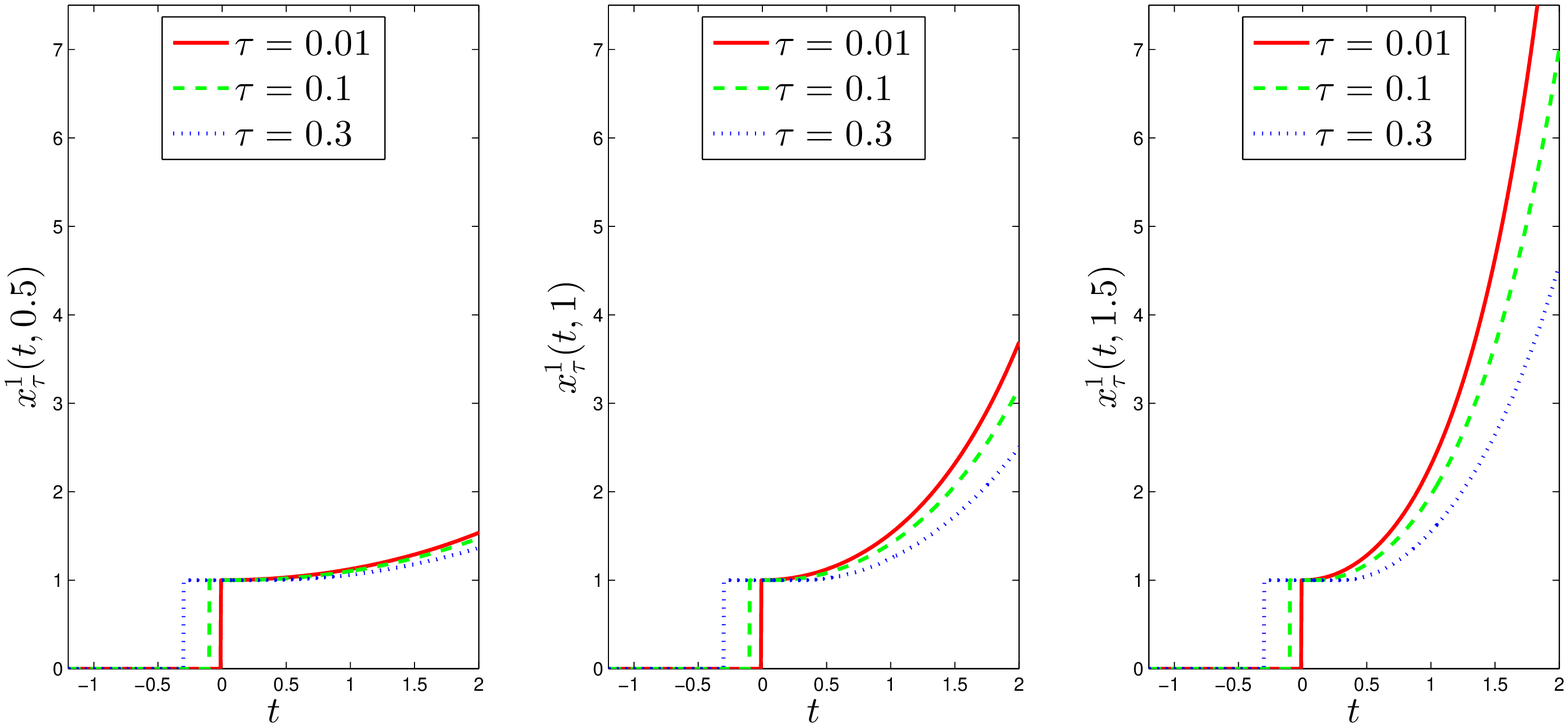}
		\includegraphics[scale = 0.4]{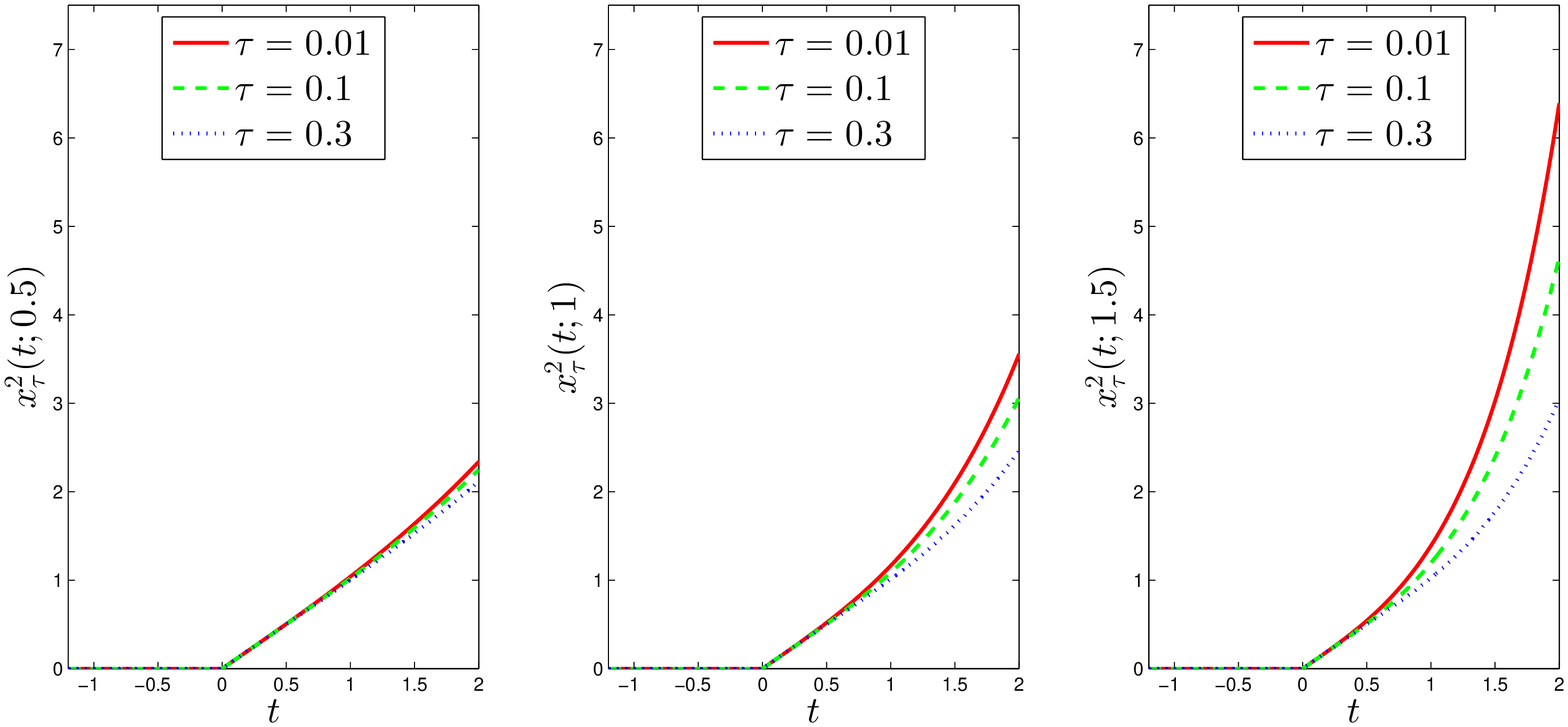}
		\caption{Functions $x_{\tau}^{1}(\cdot; \Omega)$ and $x_{\tau}^{2}(\cdot; \Omega)$. \label{FIGURE_FUNCTIONS_X1_AND_X2}}
	\end{figure}

	\begin{theorem}
		The functions $x_{1}(\cdot; \Omega), x_{2}(\cdot; \Omega)$ satisfy
		$x_{1}(\cdot; \Omega), x_{2}(\cdot; \Omega) \in C^{1}\big([-\tau, \infty), X\big) \cap C^{2}\big([-\tau, 0], X) \cap C^{2}\big([\tau, \infty), X\big)$.
		Further, $x_{1}(\cdot; \Omega)$ and $x_{2}(\cdot; \Omega)$ are solutions to the Cauchy problem (\ref{EQUATION_DDE_FOR_DEXP_1})--(\ref{EQUATION_DDE_FOR_DEXP_2})
		with the initial data
		$\varphi(t) = \mathrm{id}_{X}$, $-\tau \leq t \leq \tau$, and
		$\varphi(t) = \mathrm{id}_{X} t$, $-\tau \leq t \leq \tau$, respectively.
	\end{theorem}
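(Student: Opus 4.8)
The plan is to deduce the whole statement from the preceding Theorem and its Corollary on $\exp_{\tau}(\cdot;\pm\Omega)$, by exploiting the linearity and homogeneity of equation (\ref{EQUATION_DDE_FOR_DEXP_1}) together with the standing assumption of this subsection that $\Omega$ is an isomorphism, so that in particular $\Omega^{-1}\in L(X)$.

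Write $u_{\pm}:=\exp_{\tau}(\cdot;\pm\Omega)$. By the Theorem and the Corollary, $u_{+}$ and $u_{-}$ both lie in $C^{0}([-\tau,\infty),X)\cap C^{1}([0,\infty),X)\cap C^{2}([\tau,\infty),X)$, satisfy $\ddot{u}_{\pm}(t)-\Omega^{2}u_{\pm}(t-2\tau)=0_{X}$ for $t\ge\tau$, and coincide on $[-\tau,\tau]$ with $\varphi_{\pm}$, where $\varphi_{\pm}(t)=\mathrm{id}_{X}$ on $[-\tau,0)$ and $\varphi_{\pm}(t)=\mathrm{id}_{X}\pm\Omega t$ on $[0,\tau]$. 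Since $\tfrac{1}{2}\mathrm{id}_{X}$ and $\tfrac{1}{2}\Omega^{-1}$ are bounded linear operators that commute with $\Omega^{2}$, the functions $x_{1}=\tfrac{1}{2}(u_{+}+u_{-})$ and $x_{2}=\tfrac{1}{2}\Omega^{-1}(u_{+}-u_{-})$ belong a priori to the same three regularity classes, and — equation (\ref{EQUATION_DDE_FOR_DEXP_1}) being linear — they again solve it for $t\ge\tau$. Forming the corresponding combinations of $\varphi_{+}$ and $\varphi_{-}$ on $[-\tau,\tau]$ then identifies the initial data: $\tfrac{1}{2}(\varphi_{+}+\varphi_{-})\equiv\mathrm{id}_{X}$, so $x_{1}$ solves (\ref{EQUATION_DDE_FOR_DEXP_2}) with $\varphi\equiv\mathrm{id}_{X}$; and $\tfrac{1}{2}\Omega^{-1}(\varphi_{+}-\varphi_{-})$ equals $\mathrm{id}_{X}\,t$ on $[0,\tau]$ (and vanishes on $[-\tau,0)$), so $x_{2}$ solves (\ref{EQUATION_DDE_FOR_DEXP_2}) with $\varphi(t)=\mathrm{id}_{X}\,t$.

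It then remains to sharpen the smoothness near $t=0$, which is the one place where $x_{1}$ and $x_{2}$ are more regular than $u_{\pm}$ individually. For this I would read off the explicit piecewise-polynomial expansions of $x_{1}$ and $x_{2}$ displayed just before the statement: symmetrization cancels all odd powers of $\Omega$, so $x_{1}\equiv\mathrm{id}_{X}$ on $[-\tau,\tau]$, while $x_{2}$ is an operator-valued polynomial in $t$ on each of $[-\tau,0]$ and $[0,\tau]$; in particular both lie in $C^{2}([-\tau,0],X)$. The remaining continuity requirements at the junctions $t=k\tau$ are then checked directly from these formulas, together with the $C^{2}$-regularity on $[\tau,\infty)$ inherited from $u_{\pm}$ — the (possible) jump of $\ddot{x}_{i}$ at $t=\tau$ being harmless, since $C^{2}([\tau,\infty),X)$ is read one-sidedly at the endpoint.

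The only genuinely non-mechanical ingredient is this last junction bookkeeping: one has to track, block by block, how the monomials defining $\exp_{\tau}$ glue after symmetrization and which powers of $\Omega$ survive, so as to certify the claimed degree of smoothness at each $k\tau$ — most delicately at $t=0$ and $t=\tau$, where one must be careful about how the one-sided derivatives match. This is precisely the gluing computation already carried out for $\exp_{\tau}(\cdot;\Omega)$ in the proof of the Theorem, now repeated for the symmetric and antisymmetric combinations $u_{+}\pm u_{-}$; the even-power cancellations are exactly what produce the additional regularity at the origin.
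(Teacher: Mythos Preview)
The paper gives no proof of this theorem; it is stated immediately after the explicit piecewise formulas for $x_{1}$ and $x_{2}$ and is evidently meant to be read off from those formulas together with the preceding Theorem and Corollary on $\exp_{\tau}(\cdot;\pm\Omega)$. Your argument --- forming bounded linear combinations, invoking the linearity of (\ref{EQUATION_DDE_FOR_DEXP_1}), and then tracking the junction regularity from the explicit expansions --- is precisely that implicit derivation made explicit, so there is nothing to compare beyond noting that you have written out what the paper omits.

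One remark on the ``direct verification'' you defer at $t=0$: for $x_{2}$ the one-sided first derivatives there are $0_{L(X)}$ from the left and $\mathrm{id}_{X}$ from the right (since $x_{2}\equiv 0_{L(X)}$ on $[-\tau,0)$ while $x_{2}(t)=t\,\mathrm{id}_{X}$ on $[0,2\tau)$), so the asserted $C^{1}\big([-\tau,\infty),X\big)$-regularity and the initial datum $\varphi(t)=t\,\mathrm{id}_{X}$ on $[-\tau,0)$ both fail as literally stated. You actually notice the vanishing on $[-\tau,0)$ in your parenthetical but then let it pass; since the paper itself is silent here, your write-up would be strengthened by flagging this discrepancy rather than absorbing it into the unperformed junction bookkeeping.
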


	First, assuming $f \equiv 0_{X}$, Equations (\ref{EQUATION_LINEAR_OSCILLATOR_WITH_DELAY_GENERAL})--(\ref{EQUATION_LINEAR_OSCILLATOR_WITH_DELAY_GENERAL_IC})
	reduce to
	\begin{align}
		\ddot{x}(t) - \Omega^{2} x(t - 2\tau) &= 0 \text{ for } t \geq 0 \label{EQUATION_LINEAR_OSCILLATOR_WITH_DELAY_RIGHT_HAND_SIDE_ZERO}, \\
		x(t) &= \varphi(t) \text{ for } t \in [-2\tau, 0], \label{EQUATION_LINEAR_OSCILLATOR_WITH_DELAY_RIGHT_HAND_SIDE_ZERO_IC}
	\end{align}

	\begin{theorem}
		\label{THEOREM_SOLUTION_RIGHT_HAND_SIDE_ZERO}
		Let $\varphi \in C^{2}\big([-2\tau, 0], X\big)$.
		Then the unique classical solution $x$ to Cauchy problem (\ref{EQUATION_LINEAR_OSCILLATOR_WITH_DELAY_RIGHT_HAND_SIDE_ZERO})--(\ref{EQUATION_LINEAR_OSCILLATOR_WITH_DELAY_RIGHT_HAND_SIDE_ZERO_IC}) is given by
		\begin{equation}
			x(t) = x^{1}_{\tau}(t + \tau; \Omega) \varphi(-2\tau) + x^{2}_{\tau}(t + 2\tau; \Omega) \dot{\varphi}(-2\tau) +
			\int_{-2\tau}^{0} x^{2}_{\tau}(t - s; \Omega) \ddot{\varphi}(s) \mathrm{d}s. \notag
		\end{equation}
	\end{theorem}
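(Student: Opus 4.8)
The plan is to deduce uniqueness from the existence-and-uniqueness theory already developed and then to verify by direct substitution that the displayed expression is the (necessarily unique) classical solution. Uniqueness comes for free: by Corollary~\ref{COROLLARY_MILD_SOLUTION}, the problem (\ref{EQUATION_LINEAR_OSCILLATOR_WITH_DELAY_RIGHT_HAND_SIDE_ZERO})--(\ref{EQUATION_LINEAR_OSCILLATOR_WITH_DELAY_RIGHT_HAND_SIDE_ZERO_IC}) has a unique mild solution, and since $\varphi \in C^{2}\big([-2\tau,0],X\big)$ and $f \equiv 0_{X} \in C^{0}\big([0,\infty),X\big)$, the theorem following that corollary upgrades it to the unique classical solution. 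Hence everything reduces to showing that
\[
  x(t) := x^{1}_{\tau}(t+\tau;\Omega)\,\varphi(-2\tau) + x^{2}_{\tau}(t+2\tau;\Omega)\,\dot{\varphi}(-2\tau) + \int_{-2\tau}^{0} x^{2}_{\tau}(t-s;\Omega)\,\ddot{\varphi}(s)\,\mathrm{d}s
\]
solves (\ref{EQUATION_LINEAR_OSCILLATOR_WITH_DELAY_RIGHT_HAND_SIDE_ZERO})--(\ref{EQUATION_LINEAR_OSCILLATOR_WITH_DELAY_RIGHT_HAND_SIDE_ZERO_IC}) and has the regularity required in the definition of a classical solution.

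First I would check the initial condition on $[-2\tau,0]$. From the explicit piecewise form of the delayed exponential (see (\ref{EQUATION_DEFINITION_OF_DELAYED_EXPONENTIAL}) and the formulas for $x^{1}_{\tau}$ and $x^{2}_{\tau}$), for $t \in [-2\tau,0]$ one has $t+\tau \in [-\tau,\tau)$, hence $x^{1}_{\tau}(t+\tau;\Omega) = \mathrm{id}_{X}$; one has $t+2\tau \in [0,2\tau]$, hence $x^{2}_{\tau}(t+2\tau;\Omega) = (t+2\tau)\,\mathrm{id}_{X}$; and in the integral $x^{2}_{\tau}(t-s;\Omega)$ vanishes when $s>t$ (its argument then lying in $(-2\tau,0)$) and equals $(t-s)\,\mathrm{id}_{X}$ when $s\le t$ (its argument then lying in $[0,2\tau]$). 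Therefore, on $[-2\tau,0]$, the formula collapses to $\varphi(-2\tau) + (t+2\tau)\dot{\varphi}(-2\tau) + \int_{-2\tau}^{t}(t-s)\ddot{\varphi}(s)\,\mathrm{d}s$, which equals $\varphi(t)$ by Taylor's theorem with integral remainder. In particular $x\big|_{[-2\tau,0]} = \varphi \in C^{2}\big([-2\tau,0],X\big)$.

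Second I would verify the delay equation for $t\ge 0$. By the theorem asserting that $x^{1}_{\tau}(\cdot;\Omega)$ and $x^{2}_{\tau}(\cdot;\Omega)$ solve (\ref{EQUATION_DDE_FOR_DEXP_1})--(\ref{EQUATION_DDE_FOR_DEXP_2}), the shifted functions satisfy, for $t\ge 0$,
\[
  \tfrac{\mathrm{d}^{2}}{\mathrm{d}t^{2}}\,x^{1}_{\tau}(t+\tau;\Omega) = \Omega^{2}x^{1}_{\tau}(t-\tau;\Omega), \qquad
  \tfrac{\mathrm{d}^{2}}{\mathrm{d}t^{2}}\,x^{2}_{\tau}(t+2\tau;\Omega) = \Omega^{2}x^{2}_{\tau}(t;\Omega),
\]
so that $t\mapsto x^{1}_{\tau}(t+\tau;\Omega)$ and $t\mapsto x^{2}_{\tau}(t+2\tau;\Omega)$ are solutions of $\ddot{y}(t)=\Omega^{2}y(t-2\tau)$ on $[0,\infty)$. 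For the convolution term one notes, straight from the piecewise polynomial form of $x^{2}_{\tau}$, that $\ddot{x}^{2}_{\tau}(u;\Omega) = \Omega^{2}x^{2}_{\tau}(u-2\tau;\Omega)$ in fact holds for all $u\ge 0$ (on $[0,\tau)$ both sides vanish). For $t>0$ and $s\in[-2\tau,0]$ the argument $t-s$ lies in $(0,\infty)$, hence away from the single point $u=0$ at which $x^{2}_{\tau}$ fails to be twice differentiable, so we may differentiate twice under the Bochner integral and obtain
\[
  \tfrac{\mathrm{d}^{2}}{\mathrm{d}t^{2}}\int_{-2\tau}^{0} x^{2}_{\tau}(t-s;\Omega)\,\ddot{\varphi}(s)\,\mathrm{d}s = \Omega^{2}\int_{-2\tau}^{0} x^{2}_{\tau}\big((t-2\tau)-s;\Omega\big)\,\ddot{\varphi}(s)\,\mathrm{d}s.
\]
Adding the three contributions and comparing, term by term, with $\Omega^{2}x(t-2\tau)$ yields $\ddot{x}(t) = \Omega^{2}x(t-2\tau)$ for $t>0$, and the case $t=0$ follows by taking $t\to 0^{+}$.

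It remains to glue the two halves. On $[-2\tau,0]$ we already have $x=\varphi\in C^{2}$; on $[0,\infty)$ the three terms are $C^{2}$ (the shifts because $x^{1}_{\tau},x^{2}_{\tau}\in C^{2}\big([\tau,\infty),X\big)$, the integral by the argument above), so only the $C^{1}$-matching at $t=0$ has to be checked. Computing the right-hand limits from the formula --- using $\dot{x}^{1}_{\tau}(\tau^{+};\Omega)=0_{L(X)}$, $\dot{x}^{2}_{\tau}(2\tau^{+};\Omega)=\mathrm{id}_{X}$ and $\int_{-2\tau}^{0}\ddot{\varphi}(s)\,\mathrm{d}s = \dot{\varphi}(0)-\dot{\varphi}(-2\tau)$ --- gives $x(0^{+})=\varphi(0)$ and $\dot{x}(0^{+})=\dot{\varphi}(0)$, which agree with the left-hand values coming from $\varphi$. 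Thus $x\in C^{1}\big([-2\tau,\infty),X\big)\cap C^{2}\big([-2\tau,0],X\big)\cap C^{2}\big([0,\infty),X\big)$ is a classical solution, and uniqueness finishes the proof. The main obstacle, I expect, is precisely the bookkeeping in the last two steps: keeping track of which polynomial branch of $\exp_{\tau}$ is active for each of the arguments $t+\tau$, $t+2\tau$ and $t-s$ when $t\in[0,2\tau)$, and rigorously justifying the interchange of $\mathrm{d}^{2}/\mathrm{d}t^{2}$ with the Bochner integral in a neighborhood of $t=0$; the rest is routine verification.
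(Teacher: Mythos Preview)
Your proposal is correct and follows essentially the same route as the paper: verify directly that the formula satisfies the homogeneous delay equation (using that $x^{1}_{\tau}$ and $x^{2}_{\tau}$ are solutions) and then check the initial condition on $[-2\tau,0]$. The only cosmetic difference is that where you invoke Taylor's formula with integral remainder to see that the expression collapses to $\varphi(t)$ on $[-2\tau,0]$, the paper reaches the same identity via a change of variables followed by integration by parts; your explicit $C^{1}$-matching at $t=0$ and your appeal to Corollary~\ref{COROLLARY_MILD_SOLUTION} for uniqueness are welcome additions that the paper leaves implicit.
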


	\begin{proof}
		To solve Equations (\ref{EQUATION_LINEAR_OSCILLATOR_WITH_DELAY_GENERAL})--(\ref{EQUATION_LINEAR_OSCILLATOR_WITH_DELAY_GENERAL_IC}), we use the ansatz
		\begin{equation}
			x(t) = x^{1}_{\tau}(t + \tau; \Omega) c_{1} + x^{2}_{\tau}(t + 2\tau; \Omega) c_{2} + \int_{-2\tau}^{0} x^{2}_{\tau}(t - s; \Omega) \ddot{c}(s) \mathrm{d}s
			\label{EQUATION_ANSATZ_X_RIGHT_HAND_SIDE_ZERO}
		\end{equation}
		for some $c_{1}, c_{2} \in X$ and $c \in C^{2}\big([-2\tau, 0], X\big)$.

		Plugging the ansatz from Equation (\ref{EQUATION_ANSATZ_X_RIGHT_HAND_SIDE_ZERO}) into Equation (\ref{EQUATION_LINEAR_OSCILLATOR_WITH_DELAY_RIGHT_HAND_SIDE_ZERO}), we obtain for $t \geq 0$
		\begin{align*}
			\frac{\mathrm{d}^{2}}{\mathrm{d}t^{2}} 
			&\Big(x^{1}_{\tau}(t + \tau; \Omega) c_{1} + x^{2}_{\tau}(t + 2\tau; \Omega) c_{2} + \int_{-2\tau}^{0} x^{2}_{\tau}(t - s; \Omega) \ddot{c}(s) \mathrm{d}s\Big) \\
			-&\Omega^{2} \Big(x^{1}_{\tau}((t + \tau) - 2\tau; \Omega) c_{1} + x^{2}_{\tau}((t + 2\tau) - 2\tau; \Omega) c_{2} \\
			&+ \int_{-2\tau}^{0} x^{2}_{\tau}((t - 2\tau) - s; \Omega) \ddot{c}(s) \mathrm{d}s = 0
		\end{align*}
		or, equivalently,
		\begin{align*}
			\Big(\frac{\mathrm{d}^{2}}{\mathrm{d}t^{2}} &x^{1}_{\tau}(t + \tau; \Omega) - \Omega^{2} x^{1}_{\tau}((t + \tau) - 2\tau; \Omega)\Big) c_{1} \\
			&+ \Big(\frac{\mathrm{d}^{2}}{\mathrm{d}t^{2}} x^{2}_{\tau}(t + 2\tau; \Omega) - \Omega^{2} x^{2}_{\tau}((t + 2\tau) - 2\tau; \Omega)\Big) c_{2} \\
			&+ \int_{0}^{\tau} \Big(\frac{\mathrm{d}^{2}}{\mathrm{d}t^{2}} x^{2}_{\tau}(t - s; \Omega) - \Omega^{2} x^{2}_{\tau}((t - 2\tau) - s; \Omega)\Big) \ddot{c}(s) \mathrm{d}s \equiv 0_{X}. \notag
		\end{align*}
		Since $x^{1}_{\tau}(\cdot; \Omega)$ and $x^{2}_{\tau}(\cdot; \Omega)$ solve the homogeneous equation,
		all three coefficients at $c_{1}$, $c_{2}$ and $\ddot{c}$ vanish
		implying that the function $x$ in Equation (\ref{EQUATION_ANSATZ_X_RIGHT_HAND_SIDE_ZERO})
		is a solution of Equation (\ref{EQUATION_LINEAR_OSCILLATOR_WITH_DELAY_RIGHT_HAND_SIDE_ZERO}).

		Now, we show that selecting $c_{1} := \varphi(-2\tau)$, $c_{2} := \dot{\varphi}(-2\tau)$ and $c := \varphi$,
		the function $x$ in Equation (\ref{EQUATION_ANSATZ_X_RIGHT_HAND_SIDE_ZERO}) satisfies the initial condition (\ref{EQUATION_LINEAR_OSCILLATOR_WITH_DELAY_RIGHT_HAND_SIDE_ZERO_IC}).
		Letting for $t \in [-2\tau, 0]$
		\begin{equation}
			\big[I \varphi\big](t) := \int_{-2\tau}^{0} x^{2}_{\tau}(t - s; \Omega) \ddot{\varphi}(s) \mathrm{d}s \notag
		\end{equation}
		and performing a change of variables $\sigma := t - s$, we find
		\begin{equation}
			\big[I \varphi\big](t) =
			\int_{t + 2\tau}^{t} x^{2}_{\tau}(\sigma; \Omega) \ddot{\varphi}(t - \sigma) \mathrm{d}\sigma
			= -\int_{t}^{t + 2\tau} x^{2}_{\tau}(\sigma; \Omega) \ddot{\varphi}(t - \sigma) \mathrm{d}\sigma. \notag
		\end{equation}
		Since $x_{2}$ can continuously be extended by $0_{L(X)}$ onto $(-\infty, -\tau]$, we get
		\begin{equation}
			\big[I \varphi\big](t) = -\int_{0}^{t + 2\tau} x_{2}(\sigma; \Omega) \ddot{\varphi}(t - \sigma) \mathrm{d}\sigma. \notag
		\end{equation}
		Integrating by parts, we further get
		\begin{equation}
			\begin{split}
				\big[I \varphi\big](t) &=
				-\int_{0}^{t + 2\tau} x^{2}_{\tau}(\sigma; \Omega) \ddot{\varphi}(t - \sigma) \mathrm{d}\sigma \\
				&= -x^{2}_{\tau}(\sigma; \Omega) \dot{\varphi}(t - \sigma)\big|_{\sigma = 0}^{\sigma = t + 2\tau} +
				\int_{0}^{t + 2\tau} \dot{x}^{2}_{\tau}(\sigma; \Omega) \dot{\varphi}(t - \sigma) \mathrm{d}\sigma.
			\end{split}
			\notag
		\end{equation}
		Now, taking into account
		\begin{equation}
			x^{2}_{\tau}(t; \Omega) = t \, \mathrm{id}_{X}, 0 \leq t \leq 2\tau, \label{EQUATION_DEFINITION_OF_X_2_BETWEEN_0_AND_2TAU}
		\end{equation}
		we obtain
		\begin{equation}
			\big[I \varphi\big](t) = -x^{2}_{\tau}(t + 2\tau; \Omega) \dot{\varphi}(-2\tau) + \int_{t}^{t + 2\tau} \dot{x}^{2}_{\tau}(\sigma; \Omega) \dot{\varphi}(t - \sigma) \mathrm{d}\sigma. \notag
		\end{equation}
		Again, using Equation (\ref{EQUATION_DEFINITION_OF_X_2_BETWEEN_0_AND_2TAU}), we compute
		\begin{equation}
			\big[I \varphi](t) = -t \dot{\varphi}(-2\tau) - \varphi(t - \sigma)\big|_{\sigma = t}^{\sigma = t + 2\tau}
			= -x^{2}_{\tau}(t; \Omega) \dot{\varphi}(-2\tau) - \varphi(-2\tau) + \varphi(t). \notag
		\end{equation}
		Hence, for $t \in [-2\tau, 0]$, we have
		\begin{equation}
			x(t) = x^{1}_{\tau}(t + \tau; \Omega) \varphi(-2\tau) + x^{2}_{\tau}(t + 2\tau; \Omega) \dot{\varphi}(-2\tau) + \int_{-2\tau}^{0} x^{2}_{\tau}(t - s; \Omega) \ddot{\varphi}(s) \mathrm{d}s =
			\varphi(t) \notag
		\end{equation}
		as claimed.
	\end{proof}

	Next, we consider Equations (\ref{EQUATION_LINEAR_OSCILLATOR_WITH_DELAY_GENERAL})--(\ref{EQUATION_LINEAR_OSCILLATOR_WITH_DELAY_GENERAL_IC}) for the trivial initial data, i.e.,
	\begin{align}
		\ddot{x}(t) - \Omega^{2} x(t - 2\tau) &= f(t) \text{ for } t \geq 0 \label{EQUATION_LINEAR_OSCILLATOR_WITH_DELAY_INITIAL_DATA_ZERO}, \\
		x(t) &= 0 \text{ for } t \in [-2\tau, 0], \label{EQUATION_LINEAR_OSCILLATOR_WITH_DELAY_INITIAL_DATA_ZERO_IC}
	\end{align}

	\begin{theorem}
		\label{THEOREM_SOLUTION_INITIAL_DATA_ZERO}
		Let $f \in C^{0}\big([0, \infty), X\big)$.
		The unique classical solution $x$ to Cauchy problem (\ref{EQUATION_LINEAR_OSCILLATOR_WITH_DELAY_INITIAL_DATA_ZERO})--(\ref{EQUATION_LINEAR_OSCILLATOR_WITH_DELAY_INITIAL_DATA_ZERO_IC}) is given by
		\begin{equation}
			x(t) = \int_{0}^{t} x^{2}_{\tau}(t - s; \Omega) f(s) \mathrm{d}s. \notag
		\end{equation}
	\end{theorem}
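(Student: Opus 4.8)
The plan is to verify the proposed representation directly, by differentiating twice under the Bochner integral and using the properties of $x^{2}_{\tau}(\cdot; \Omega)$ established above; uniqueness will then be read off from Corollary \ref{COROLLARY_MILD_SOLUTION}. Abbreviate $K := x^{2}_{\tau}(\cdot; \Omega)$. From the explicit formula for $x^{2}_{\tau}$ and the preceding theorem on $x^{1}_{\tau}(\cdot;\Omega), x^{2}_{\tau}(\cdot;\Omega)$ I shall use that $K$ vanishes on $(-\infty, 0]$, equals $\sigma\,\mathrm{id}_{X}$ for $\sigma \in [0, 2\tau]$, belongs to $C^{2}\big((0,\infty), L(X)\big)$ with $\ddot K$ extending continuously to $\sigma = 0$, and satisfies
\begin{equation}
	K(0) = 0_{L(X)}, \qquad \dot K(0^{+}) = \mathrm{id}_{X}, \qquad \ddot K(\sigma) = \Omega^{2} K(\sigma - 2\tau) \text{ for all } \sigma > 0; \notag
\end{equation}
the last identity holds for $\sigma \geq \tau$ by the preceding theorem and for $0 < \sigma < \tau$ trivially, since both sides vanish there. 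Set $x(t) := \int_{0}^{t} K(t - s) f(s)\, \mathrm{d}s$ for $t \geq 0$ and $x(t) := 0_{X}$ for $t \in [-2\tau, 0]$.

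Since $K \in C^{1}\big([0,\infty), L(X)\big)$ and $f \in C^{0}\big([0,\infty), X\big)$, the Leibniz rule for parameter-dependent Bochner integrals gives, for $t \geq 0$,
\begin{equation}
	\dot x(t) = K(0) f(t) + \int_{0}^{t} \dot K(t - s) f(s)\, \mathrm{d}s = \int_{0}^{t} \dot K(t - s) f(s)\, \mathrm{d}s, \notag
\end{equation}
using $K(0) = 0_{L(X)}$; in particular $\dot x(0^{+}) = 0_{X}$, so that $x$ joins the trivial initial datum in a $C^{1}$ manner and $x \in C^{1}\big([-2\tau, \infty), X\big)$. Differentiating once more --- legitimate because $\dot K$ is continuous on $[0,\infty)$, of class $C^{1}$ on $(0,\infty)$, and $\ddot K$ extends continuously across $\sigma = 0$ with value $0_{L(X)}$ --- we obtain
\begin{equation}
	\ddot x(t) = \dot K(0^{+}) f(t) + \int_{0}^{t} \ddot K(t - s) f(s)\, \mathrm{d}s = f(t) + \Omega^{2} \int_{0}^{t} K\big((t - 2\tau) - s\big) f(s)\, \mathrm{d}s. \notag
\end{equation}
Here the boundary term is exactly $f(t)$ because $\dot K(0^{+}) = \mathrm{id}_{X}$; this one-sided derivative of $x^{2}_{\tau}$ at the origin is precisely what produces the inhomogeneity on the right-hand side of the equation.

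It remains to identify the last integral with $x(t - 2\tau)$. For $0 \leq t \leq 2\tau$ the argument $(t - 2\tau) - s$ is $\leq 0$ for every $s \in [0, t]$, so $K$ vanishes there and the integral equals $0_{X} = x(t - 2\tau)$. For $t > 2\tau$ one splits $\int_{0}^{t} = \int_{0}^{t - 2\tau} + \int_{t - 2\tau}^{t}$; on the second piece $(t - 2\tau) - s \in [-2\tau, 0]$, whence $K$ again vanishes, while the first piece is precisely $x(t - 2\tau)$. Thus $\ddot x(t) - \Omega^{2} x(t - 2\tau) = f(t)$ for all $t \geq 0$ and $x|_{[-2\tau, 0]} \equiv 0_{X}$; since $f$ and the remaining integral term are continuous, $\ddot x \in C^{0}\big([0,\infty), X\big)$, so $x$ is a classical solution of (\ref{EQUATION_LINEAR_OSCILLATOR_WITH_DELAY_INITIAL_DATA_ZERO})--(\ref{EQUATION_LINEAR_OSCILLATOR_WITH_DELAY_INITIAL_DATA_ZERO_IC}), and uniqueness follows from Corollary \ref{COROLLARY_MILD_SOLUTION}. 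I expect the only genuinely delicate points to be the twofold justification of differentiation under the Bochner integral --- routine once one observes that $\ddot K$ extends continuously to $\sigma = 0$ with value $0_{L(X)}$ --- and the small bookkeeping showing that the portion of the convolution over $[t - 2\tau, t]$ contributes nothing.
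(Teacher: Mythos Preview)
Your proof is correct and follows essentially the same route as the paper: differentiate the convolution $\int_{0}^{t} x^{2}_{\tau}(t-s;\Omega)\,f(s)\,\mathrm{d}s$ twice via the Leibniz rule, pick up the boundary term $\dot{x}^{2}_{\tau}(0^{+}) f(t) = f(t)$, and use that $x^{2}_{\tau}$ solves the homogeneous delay equation. Your version is in fact more complete than the paper's, which frames the computation as an ansatz with unknown $c$ and solves for $c\equiv f$ but does not spell out the initial-condition check, the identification of the delayed integral with $x(t-2\tau)$, or the appeal to Corollary~\ref{COROLLARY_MILD_SOLUTION} for uniqueness.
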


	\begin{proof}
		To find an explicit solution representation, we use the ansatz
		\begin{equation}
			x(t) = \int_{0}^{t} x_{2}(t - s; \Omega) c(s) \mathrm{d}s \text{ for } t \geq \tau \notag
		\end{equation}
		for some function $c \in C^{0}\big([0, \infty), X\big)$.
		Differentiating this expression with respect to $t$ and exploiting the initial conditions for $x^{2}_{\tau}(\cdot; \Omega)$, we get
		\begin{align*}
			\dot{x}(t) &= \int_{0}^{t} \dot{x}^{2}_{\tau}(t - s; \Omega) c(s) \mathrm{d}s + x^{2}_{\tau}(t - s; \Omega) c(s)\big|_{s = t}
			= \int_{0}^{t} \dot{x}^{2}_{\tau}(t - s; \Omega) c(s) \mathrm{d}s + x_{2}(0) c(t) \\
			&= \int_{0}^{t} \dot{x}^{2}_{\tau}(t - s; \Omega) c(s) \mathrm{d}s. \notag
		\end{align*}
		Differentiating again, we find
		\begin{align*}
			\ddot{x}(t) &= \int_{0}^{t} \ddot{x}^{2}_{\tau}(t - s; \Omega) c(s) \mathrm{d}s + \dot{x}^{2}_{\tau}(t - s; \Omega) c(s)\big|_{s = t} \\
			&= \int_{0}^{t} \ddot{x}^{2}_{\tau}(t - \tau - s; \Omega) c(s) \mathrm{d}s + \dot{x}^{2}_{\tau}(0+; \Omega) c(t) \\
			&= \int_{0}^{t} \ddot{x}^{2}_{\tau}(t - s; \Omega) c(s) \mathrm{d}s + c(t).
		\end{align*}
		Plugging this into Equation (\ref{EQUATION_LINEAR_OSCILLATOR_WITH_DELAY_INITIAL_DATA_ZERO})
		and recalling that $x^{2}_{\tau}(\Omega; \Omega)$ is a solution of the homogeneous equation, we get
		\begin{equation}
			c(t) \int_{0}^{t} \big(\ddot{x}^{2}_{\tau}(t - s; \Omega) - \Omega^{2} x^{2}_{\tau}(t - 2\tau - s; \Omega)\big) c(s) \mathrm{d}s = f(t) \notag
		\end{equation}
		and therefore $c \equiv f$.
	\end{proof}

	As a consequence from Theorems \ref{THEOREM_SOLUTION_RIGHT_HAND_SIDE_ZERO} and \ref{THEOREM_SOLUTION_INITIAL_DATA_ZERO}, we obtain
	using the linearity property of Equations (\ref{EQUATION_LINEAR_OSCILLATOR_WITH_DELAY_GENERAL})--(\ref{EQUATION_LINEAR_OSCILLATOR_WITH_DELAY_GENERAL_IC}):
	\begin{theorem}
		\label{THEOREM_REPRESENTATION_OF_CLASSICAL_SOLUTIONS}
		Let $\varphi \in C^{2}\big([-2\tau, 0], X\big)$ and $f \in C^{0}\big([0, \infty), X\big)$.
		The unique classical solution to Equations (\ref{EQUATION_LINEAR_OSCILLATOR_WITH_DELAY_GENERAL})--(\ref{EQUATION_LINEAR_OSCILLATOR_WITH_DELAY_GENERAL_IC}) is given by
		\begin{equation}
			\begin{split}
				x(t) &= x^{1}_{\tau}(t + \tau; \Omega) \varphi(-2\tau) + x^{2}_{\tau}(t + 2\tau; \Omega) \dot{\varphi}(-2\tau) + \int_{-2\tau}^{0} x^{2}_{\tau}(t - s; \Omega) \ddot{\varphi}(s) \mathrm{d}s \\
				&+ \left\{
				\begin{array}{cl}
					0, & t \in [-2\tau, 0), \\
					\int_{0}^{t} x^{2}_{\tau}(t - s; \Omega) f(s) \mathrm{d}s, & t \geq 0
				\end{array}\right.
			\end{split}
			\notag
		\end{equation}
		for $t \in [-2\tau, \infty)$.
	\end{theorem}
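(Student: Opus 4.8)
The plan is to exploit the linearity of the Cauchy problem (\ref{EQUATION_LINEAR_OSCILLATOR_WITH_DELAY_GENERAL})--(\ref{EQUATION_LINEAR_OSCILLATOR_WITH_DELAY_GENERAL_IC}) and simply superpose the two solution formulas already established. First I would set $x := y + z$, where $y$ is the classical solution of the homogeneous problem (\ref{EQUATION_LINEAR_OSCILLATOR_WITH_DELAY_RIGHT_HAND_SIDE_ZERO})--(\ref{EQUATION_LINEAR_OSCILLATOR_WITH_DELAY_RIGHT_HAND_SIDE_ZERO_IC}) with history $\varphi$, supplied by Theorem \ref{THEOREM_SOLUTION_RIGHT_HAND_SIDE_ZERO}, and $z$ is the classical solution of the inhomogeneous problem (\ref{EQUATION_LINEAR_OSCILLATOR_WITH_DELAY_INITIAL_DATA_ZERO})--(\ref{EQUATION_LINEAR_OSCILLATOR_WITH_DELAY_INITIAL_DATA_ZERO_IC}) with vanishing history and forcing $f$, supplied by Theorem \ref{THEOREM_SOLUTION_INITIAL_DATA_ZERO}. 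Under the hypotheses $\varphi \in C^{2}([-2\tau,0],X)$ and $f \in C^{0}([0,\infty),X)$ both theorems apply, and adding the two closed-form expressions produces exactly the formula in the statement.

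Next I would verify that $w := y+z$ is genuinely a classical solution. Since both $v \mapsto \ddot{v}(\cdot) - \Omega^{2} v(\cdot - 2\tau)$ and the restriction $v \mapsto v|_{[-2\tau,0]}$ are linear, one gets $\ddot{w}(t) - \Omega^{2} w(t-2\tau) = 0 + f(t)$ for $t \ge 0$ and $w(t) = \varphi(t) + 0$ for $t \in [-2\tau,0]$. For the regularity, I would note that $y$ inherits membership in $C^{1}([-2\tau,\infty),X) \cap C^{2}([-2\tau,0],X) \cap C^{2}([0,\infty),X)$ from the smoothness of $x^{1}_{\tau}(\cdot;\Omega)$ and $x^{2}_{\tau}(\cdot;\Omega)$ established earlier together with $\varphi \in C^{2}$, while $z$ inherits it from the differentiation identities derived inside the proof of Theorem \ref{THEOREM_SOLUTION_INITIAL_DATA_ZERO}, which exhibit $\ddot{z}$ as continuous on $[0,\infty)$; the only delicate point is the matching of one-sided second derivatives of $x^{2}_{\tau}(\cdot;\Omega)$ at the nodes $k\tau$, which is already encoded in $x^{2}_{\tau}(\cdot;\Omega) \in C^{2}([\tau,\infty),X)$ and in the identity $x^{2}_{\tau}(t;\Omega) = t\,\mathrm{id}_{X}$ on $[0,2\tau]$. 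Hence $w$ lies in $C^{1}([-2\tau,\infty),X) \cap C^{2}([-2\tau,0],X) \cap C^{2}([0,\infty),X)$ and is a classical solution.

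Finally, for uniqueness I would argue that any classical solution is in particular a mild solution of (\ref{EQUATION_LINEAR_OSCILLATOR_WITH_DELAY_MILD_FORMULATION})--(\ref{EQUATION_LINEAR_OSCILLATOR_WITH_DELAY_MILD_FORMULATION_IC}), and by Corollary \ref{COROLLARY_MILD_SOLUTION} --- equivalently, Theorem \ref{THEOREM_MILD_SOLUTION_DISCRETE} via the step-method reduction --- the mild solution is unique; therefore it must coincide with $w = y+z$. I do not expect a serious obstacle: the analytic substance resides entirely in Theorems \ref{THEOREM_SOLUTION_RIGHT_HAND_SIDE_ZERO} and \ref{THEOREM_SOLUTION_INITIAL_DATA_ZERO}, so the only real work is the bookkeeping of checking that the two building-block formulas patch together with the claimed regularity across $t=0$.
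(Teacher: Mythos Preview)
Your proposal is correct and follows exactly the paper's own argument: the paper states the theorem simply ``as a consequence from Theorems \ref{THEOREM_SOLUTION_RIGHT_HAND_SIDE_ZERO} and \ref{THEOREM_SOLUTION_INITIAL_DATA_ZERO} \dots\ using the linearity property,'' and your superposition $x = y + z$ together with the uniqueness appeal to Corollary \ref{COROLLARY_MILD_SOLUTION} is precisely the intended justification, spelled out in more detail than the paper itself provides.
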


	Finally, we get:
	\begin{theorem}
		Let $\varphi \in C^{1}\big([-2\tau, 0], X\big)$ and $f \in L^{1}_{\mathrm{loc}}(0, \infty; X)$.
		The unique mild solution to Equations (\ref{EQUATION_LINEAR_OSCILLATOR_WITH_DELAY_GENERAL})--(\ref{EQUATION_LINEAR_OSCILLATOR_WITH_DELAY_GENERAL_IC}) is given by
		\begin{equation}
			\begin{split}
				x(t) &= x^{1}_{\tau}(t + \tau; \Omega) \varphi(-2\tau) + x^{2}_{\tau}(t + 2\tau; \Omega) \dot{\varphi}(0) - \int_{-2\tau}^{0} \dot{x}^{2}_{\tau}(t - s; \Omega) \dot{\varphi}(s) \mathrm{d}s \\
				&+ \left\{
				\begin{array}{cl}
					0, & t \in [-2\tau, 0), \\
					\int_{0}^{t} x^{2}_{\tau}(t - s; \Omega) f(s) \mathrm{d}s, & t \geq 0
				\end{array}\right.
			\end{split}
			\notag
		\end{equation}
		for $t \in [-2\tau, \infty)$.
	\end{theorem}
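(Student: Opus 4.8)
The plan is to derive this representation from the one for classical solutions in Theorem~\ref{THEOREM_REPRESENTATION_OF_CLASSICAL_SOLUTIONS} by \emph{removing the second derivative of $\varphi$ through an integration by parts}, and then to pass to the limit from $C^{2}$- to $C^{1}$-data. Fix first $\varphi \in C^{2}\big([-2\tau, 0], X\big)$ and $f \in C^{0}\big([0, \infty), X\big)$. For $t \geq 0$ the map $s \mapsto x^{2}_{\tau}(t - s; \Omega)$ belongs to $C^{1}\big([-2\tau, 0], L(X)\big)$, since $x^{2}_{\tau}(\cdot; \Omega) \in C^{1}\big([0, \infty), X\big)$ (the delayed exponentials being glued with at least $C^{1}$-smoothness at every node $k\tau$, $k \geq 1$) and $t - s$ runs through $[t, t + 2\tau] \subset [0, \infty)$. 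Hence
\[
	\int_{-2\tau}^{0} x^{2}_{\tau}(t - s; \Omega)\, \ddot{\varphi}(s)\, \mathrm{d}s
	= x^{2}_{\tau}(t; \Omega)\, \dot{\varphi}(0) - x^{2}_{\tau}(t + 2\tau; \Omega)\, \dot{\varphi}(-2\tau)
	+ \int_{-2\tau}^{0} \dot{x}^{2}_{\tau}(t - s; \Omega)\, \dot{\varphi}(s)\, \mathrm{d}s .
\]
Substituting this into the formula of Theorem~\ref{THEOREM_REPRESENTATION_OF_CLASSICAL_SOLUTIONS}, the boundary contribution $x^{2}_{\tau}(t + 2\tau; \Omega)\, \dot{\varphi}(-2\tau)$ cancels against the term already present there, and one is left with the asserted representation --- an expression that now only involves $\varphi$ and $\dot{\varphi}$ and thus makes sense for any $\varphi \in C^{1}\big([-2\tau, 0], X\big)$. (For $t \in [-2\tau, 0)$ the map $s \mapsto x^{2}_{\tau}(t - s; \Omega)$ has a corner at $s = t$ coming from the kink of $x^{2}_{\tau}$ at the origin; splitting the integral at $s = t$ shows the same identity persists there.)

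It then remains to prove that this expression is indeed the unique mild solution when only $\varphi \in C^{1}\big([-2\tau, 0], X\big)$ and $f \in L^{1}_{\mathrm{loc}}(0, \infty; X)$. I would argue by approximation: choose $\varphi_{k} \in C^{2}\big([-2\tau, 0], X\big)$ with $\varphi_{k} \to \varphi$ in $C^{1}\big([-2\tau, 0], X\big)$ --- for instance by extending $\varphi$ to a slightly larger interval and mollifying --- and $f_{k} \in C^{0}\big([0, \infty), X\big)$ with $f_{k} \to f$ in $L^{1}(0, T; X)$ for every $T > 0$. For each $k$, Theorem~\ref{THEOREM_REPRESENTATION_OF_CLASSICAL_SOLUTIONS} furnishes the classical --- hence mild --- solution $x_{k}$ with data $(\varphi_{k}, f_{k})$, which by the integration by parts above equals the candidate formula evaluated at $(\varphi_{k}, f_{k})$.

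On the one hand, the continuous-dependence estimate of Corollary~\ref{COROLLARY_MILD_SOLUTION}, applied (by linearity of the problem) to $x_{k} - x$ with data $\varphi_{k} - \varphi$ and $f_{k} - f$, yields $x_{k} \to x$ in $C^{1}\big([-2\tau, T], X\big)$ for every $T = 2n\tau$, $n \in \mathbb{N}$, where $x$ denotes the mild solution with data $(\varphi, f)$. On the other hand, every term of the candidate formula is continuous in the data: the point-evaluation terms $x^{1}_{\tau}(t + \tau; \Omega)\varphi(-2\tau)$ and $x^{2}_{\tau}(t + 2\tau; \Omega)\dot{\varphi}(0)$ trivially, and the integral terms $\int_{-2\tau}^{0} \dot{x}^{2}_{\tau}(t - s; \Omega)\dot{\varphi}(s)\, \mathrm{d}s$ and $\int_{0}^{t} x^{2}_{\tau}(t - s; \Omega) f(s)\, \mathrm{d}s$ because $x^{2}_{\tau}(\cdot; \Omega)$ and $\dot{x}^{2}_{\tau}(\cdot; \Omega)$ are bounded on compact intervals, so these integrals depend Lipschitz-continuously on $\dot{\varphi} \in C^{0}$ and on $f \in L^{1}(0, t; X)$, uniformly for $t$ in bounded sets. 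Thus the candidate formula at $(\varphi_{k}, f_{k})$ converges, locally uniformly in $t$, to the candidate formula at $(\varphi, f)$, and uniqueness of limits identifies $x$ with the latter.

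The main obstacle is the integration by parts of the first step --- more precisely the bookkeeping of the boundary terms and the handling of the corner of $x^{2}_{\tau}(\cdot; \Omega)$ at $0$ when $t \in [-2\tau, 0)$. An alternative that sidesteps it is a direct verification: that the candidate formula lies in $C^{1}\big([-2\tau, \infty), X\big)$, that on $[-2\tau, 0)$ it collapses to $\varphi(-2\tau) + \int_{-2\tau}^{t} \dot{\varphi}(s)\, \mathrm{d}s = \varphi(t)$ (using $x^{2}_{\tau}(u; \Omega) = u\, \mathrm{id}_{X}$ on $[0, 2\tau]$, $x^{1}_{\tau}(u; \Omega) = \mathrm{id}_{X}$ on $[-\tau, \tau]$, and the extension of $x^{2}_{\tau}$ by $0_{L(X)}$ onto $(-\infty, -\tau]$), and that it satisfies the mild identity~\eqref{EQUATION_LINEAR_OSCILLATOR_WITH_DELAY_MILD_FORMULATION} for $t \geq 0$; the remaining steps are then routine.
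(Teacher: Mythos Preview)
Your approach --- integrate by parts in the $\ddot{\varphi}$-integral of Theorem~\ref{THEOREM_REPRESENTATION_OF_CLASSICAL_SOLUTIONS}, then approximate $(\varphi,f)$ by $(C^{2},C^{0})$-data and pass to the limit via the continuous-dependence estimate of Corollary~\ref{COROLLARY_MILD_SOLUTION} --- is exactly the paper's; its one-sentence proof says precisely ``approximating \dots, performing a partial integration for the integral involving $\ddot{\varphi}_{n}$ and passing to the limit''. One caution on the bookkeeping: your (correct) integration-by-parts identity, after the cancellation you describe, actually leaves
\[
x^{2}_{\tau}(t;\Omega)\,\dot{\varphi}(0) \;+\; \int_{-2\tau}^{0}\dot{x}^{2}_{\tau}(t-s;\Omega)\,\dot{\varphi}(s)\,\mathrm{d}s,
\]
not the combination $x^{2}_{\tau}(t+2\tau;\Omega)\,\dot{\varphi}(0) - \int_{-2\tau}^{0}\dot{x}^{2}_{\tau}(t-s;\Omega)\,\dot{\varphi}(s)\,\mathrm{d}s$ printed in the statement; the argument shift and the sign in the displayed formula appear to be misprints (the paper never writes the outcome of the partial integration explicitly), so you should flag this discrepancy rather than assert that you recover ``the asserted representation''.
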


	\begin{proof}
		Approximating $\varphi$ in $C^{1}\big([-2\tau, 0], X\big)$ with $(\varphi_{n})_{n \in \mathbb{N}} \subset C^{2}\big([-2\tau, 0], X\big)$ and
		$f$ in $L^{1}_{\mathrm{loc}}(0, \infty; X)$ with $(f_{n})_{n \in \mathbb{N}} \subset C^{0}\big([0, \infty), X\big)$,
		applying Theorem \ref{THEOREM_REPRESENTATION_OF_CLASSICAL_SOLUTIONS} to solve the Cauchy problem (\ref{EQUATION_LINEAR_OSCILLATOR_WITH_DELAY_GENERAL})--(\ref{EQUATION_LINEAR_OSCILLATOR_WITH_DELAY_GENERAL_IC})
		for the right-hand side $f$ and the initial data $\varphi_{n}$, performing a partial integration for the integral involving $\ddot{\varphi}_{n}$
		and passing to the limit as $n \to \infty$, the claim follows.
	\end{proof}

	\subsection{Asymptotic behavior as $\tau \to 0$}
	Again, we assume $X$ to be a Banach space and prove the following generalization of \cite[Lemma 4]{KhuPo2014}.
	\begin{lemma}
		\label{LEMMA_DELAYED_EXPONENTIAL_ASYMPTOTICS}
		Let $\Omega \in L(X)$, $T > 0$, $\tau_{0} > 0$ and let
		\begin{equation}
			\alpha := 1 + \|\Omega\|_{L(X)} \exp\big(\tau_{0} \|\Omega\|_{L(X)}\big). \notag
		\end{equation}
		Then for any $\tau \in (0, \tau_{0}]$,
		\begin{equation}
			\|\exp_{\tau}(t - \tau; \Omega) - \exp(\Omega t)\|_{L(X)} \leq \tau \exp(\alpha T \|\Omega\|_{L(X)}) \text{ for } t \in [0, T]. \notag
		\end{equation}
	\end{lemma}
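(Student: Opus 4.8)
The plan is to compare $y(t):=\exp_{\tau}(t-\tau;\Omega)$ with $z(t):=\exp(\Omega t)$ through the Volterra integral equations they satisfy. From the identity $\tfrac{\mathrm{d}}{\mathrm{d}t}\exp_{\tau}(t;\Omega)=\Omega\,\exp_{\tau}(t-\tau;\Omega)$ for $t\ge 0$, established above in the proof that the delayed exponential solves its Cauchy problem, together with $\exp_{\tau}(\cdot;\Omega)\equiv\mathrm{id}_{X}$ on $[-\tau,0)$, one obtains for $t\ge 0$
\begin{equation}
	y(t)=\mathrm{id}_{X}+\Omega\int_{0}^{\max\{0,\,t-\tau\}}y(s)\,\mathrm{d}s
	\qquad\text{and}\qquad
	z(t)=\mathrm{id}_{X}+\Omega\int_{0}^{t}z(s)\,\mathrm{d}s.\notag
\end{equation}
Subtracting and writing $\delta:=y-z$ yields the exact identity
\begin{equation}
	\delta(t)=\Omega\int_{0}^{\max\{0,\,t-\tau\}}\delta(s)\,\mathrm{d}s-\Omega\int_{\max\{0,\,t-\tau\}}^{t}z(s)\,\mathrm{d}s,\qquad t\ge 0.\notag
\end{equation}

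The first step is to estimate the inhomogeneous term. Since $\Omega\in L(X)$ one has $\|z(s)\|_{L(X)}\le\exp(s\|\Omega\|_{L(X)})$, and the integration interval $[\max\{0,t-\tau\},t]$ has length $\min\{t,\tau\}\le\tau$, so its norm is at most $\tau\|\Omega\|_{L(X)}\exp(t\|\Omega\|_{L(X)})$. (For $t\in[0,\tau]$ the first integral is absent and $\delta(t)=\mathrm{id}_{X}-z(t)$ already satisfies $\|\delta(t)\|_{L(X)}\le\exp(t\|\Omega\|_{L(X)})-1\le\tau\|\Omega\|_{L(X)}\exp(t\|\Omega\|_{L(X)})$.) Bounding $\max\{0,t-\tau\}\le t$ in the $\delta$-integral, one arrives at the scalar integral inequality
\begin{equation}
	\|\delta(t)\|_{L(X)}\le\|\Omega\|_{L(X)}\int_{0}^{t}\|\delta(s)\|_{L(X)}\,\mathrm{d}s+\tau\,\|\Omega\|_{L(X)}\exp(t\|\Omega\|_{L(X)}),\qquad t\in[0,T],\notag
\end{equation}
to which Gronwall's inequality applies; keeping the $t$-dependence of the forcing term rather than replacing it by its supremum on $[0,T]$ yields a bound of the form $\|\delta(t)\|_{L(X)}\le\tau\,C\big(t;\|\Omega\|_{L(X)}\big)$ with $C$ explicit and nondecreasing in $t$.

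The remaining — and, I expect, the only genuinely delicate — step is to verify $C\big(T;\|\Omega\|_{L(X)}\big)\le\exp\!\big(\alpha T\|\Omega\|_{L(X)}\big)$ for $\alpha=1+\|\Omega\|_{L(X)}\exp(\tau_{0}\|\Omega\|_{L(X)})$. This is where the hypothesis $\tau\le\tau_{0}$ enters, together with elementary real-variable estimates such as $\exp(x)-1\le x\exp(x)$ and $1+x\le\exp(x)$; the point is not to relax the delay in $\int_{0}^{\max\{0,t-\tau\}}$ prematurely, since that discards precisely the margin the inflated exponent $\alpha\|\Omega\|_{L(X)}$ is meant to absorb, and the short-time regime $T\le\tau$ — where $y\equiv\mathrm{id}_{X}$ on $[0,T]$ and the estimate reduces to $\|\mathrm{id}_{X}-\exp(\Omega t)\|_{L(X)}\le\exp(t\|\Omega\|_{L(X)})-1$ — should be inspected on its own, the generic Gronwall bound being wasteful there.

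An equivalent, more computational route, in the spirit of the series manipulations used for $\exp_{\tau}$ above, is to split the explicit polynomial representation on $[N\tau,(N+1)\tau)$ with $N=\lfloor t/\tau\rfloor$ as $\exp_{\tau}(t-\tau;\Omega)-\exp(\Omega t)=\sum_{j=1}^{N}\tfrac{\Omega^{j}}{j!}\big((t-j\tau)^{j}-t^{j}\big)-\sum_{j>N}\tfrac{\Omega^{j}t^{j}}{j!}$: the first sum is of order $\tau$ via $|t^{j}-(t-j\tau)^{j}|\le j^{2}\tau\,t^{j-1}$ (valid since $0\le t-j\tau\le t$ for $j\le N$), contributing at most $\tau\|\Omega\|_{L(X)}\big(1+\|\Omega\|_{L(X)}t\big)\exp(\|\Omega\|_{L(X)}t)$, while the tail is of order $\tau$ because each of its terms carries an index exceeding $t/\tau$. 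Collecting the two contributions and dominating by $\exp(\alpha T\|\Omega\|_{L(X)})$ closes the argument; as a by-product, sending $\tau\to 0$ recovers the uniform convergence of $\exp_{\tau}(\cdot-\tau;\Omega)$ to $\exp(\Omega\,\cdot)$ on $[0,T]$ announced in the introduction.
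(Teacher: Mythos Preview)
Your route is genuinely different from the paper's. The paper does not use Gronwall at all: it proceeds by induction on the intervals $((k-1)\tau,k\tau]$, showing
\[
\|\exp_{\tau}(t-\tau;\Omega)-e^{t\Omega}\|_{L(X)}\le \tau\,e^{\alpha k\tau\|\Omega\|}\qquad\text{for } t\in((k-1)\tau,k\tau].
\]
For the inductive step one writes, via the fundamental theorem of calculus and $\frac{\mathrm{d}}{\mathrm{d}s}\exp_{\tau}(s-\tau;\Omega)=\Omega\exp_{\tau}(s-2\tau;\Omega)$,
\[
\|\delta(t)\|\le \tau e^{\alpha k\tau\|\Omega\|}
+\|\Omega\|\!\int_{k\tau}^{(k+1)\tau}\!\|\exp_{\tau}(s-2\tau;\Omega)-e^{(s-\tau)\Omega}\|\,\mathrm{d}s
+\|\Omega\|\!\int_{k\tau}^{(k+1)\tau}\!\|e^{s\Omega}-e^{(s-\tau)\Omega}\|\,\mathrm{d}s,
\]
bounds the middle integral by the inductive hypothesis (after the shift $s\mapsto s-\tau$) and the last by the mean value theorem, and then packages the multiplicative gain over one step as $e^{\alpha\tau\|\Omega\|}$. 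The step structure is precisely what makes the exponent come out as $\alpha k\tau\|\Omega\|$.

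Your Volterra--Gronwall argument and your series argument are both sound and both deliver the bound $\|\delta(t)\|\le \tau\,\|\Omega\|(1+\|\Omega\|t)e^{\|\Omega\|t}$ (this is exactly what the non-delayed Gronwall with forcing $\tau\|\Omega\|e^{\|\Omega\|t}$ produces, and what your sum $\sum_{j\ge 1}\tfrac{j^{2}}{j!}(\|\Omega\|t)^{j}$ evaluates to). That is a perfectly good $O(\tau)$ estimate, adequate for the convergence results that follow. But the step you single out as ``delicate'' --- dominating your constant by $e^{\alpha T\|\Omega\|}$ --- is not a matter of elementary inequalities; it actually fails. Take $\|\Omega\|>1$ and let $T\to 0$: your constant tends to $\|\Omega\|$, while $e^{\alpha T\|\Omega\|}\to 1$. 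So no refinement of the Gronwall bookkeeping, and no more careful handling of the delayed upper limit $\max\{0,t-\tau\}$, will close this gap. (In fact the same small-$T$ regime is where the paper's own base case ``easily follows from the mean value theorem'' is thinnest, since one needs $\|\Omega\|\le e^{\tau\|\Omega\|^{2}e^{\tau_{0}\|\Omega\|}}$ there; the inductive architecture, however, is what is tailored to produce the exponent $\alpha$ rather than the generic Gronwall exponent your method yields.)
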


	\begin{proof}
		Let $\tau \in (0, \tau_{0}]$. For $t \in [0, \tau]$, the claim easily follows from the mean value theorem for Bochner integration.
		Next, we want to exploit the mathematical induction to show for any $k \in \mathbb{N}$.
		\begin{equation}
			\|\exp_{\tau}(t - \tau; \Omega) - \exp(t \Omega)\|_{L(X)} \leq \tau \exp\big(\alpha k \tau \|\Omega\|_{L(X)}\big) \text{ for } t \in ((k - 1) \tau, k \tau].
			\notag
		\end{equation}
		Indeed, assuming that the claim is true for some $k \in \mathbb{N}$,
		we use the fundamental theorem of calculus and find for $t \in (k \tau, (k + 1) \tau]$
		\begin{align*}
			\|&\exp_{\tau}(t - \tau; \Omega) - \exp(t \Omega)\|_{L(X)} \\
			&\leq \tau \exp\big(\alpha k \tau \|\Omega\|_{L(X)}\big) +
			\int_{k\tau}^{(k + 1) \tau} \Big\|\frac{\mathrm{d}}{\mathrm{d}s} \exp_{\tau}(s - \tau; \Omega) - \frac{\mathrm{d}}{\mathrm{d}s} \exp(s \Omega)\Big\|_{L(X)} \mathrm{d}s \\
			&\leq \tau \exp\big(\alpha k \tau \|\Omega\|_{L(X)}\big) + \|\Omega\|_{L(X)} \int_{k\tau}^{(k + 1) \tau} \|\exp_{\tau}(s - 2 \tau; \Omega) - \exp(s \Omega)\|_{L(X)} \mathrm{d}s \\
			&\leq \tau \exp\big(\alpha k \tau \|\Omega\|_{L(X)}\big) + \|\Omega\|_{L(X)} \int_{k\tau}^{(k + 1) \tau} \big\|\exp_{\tau}(s - 2 \tau; \Omega) - \exp\big((s - \tau) \Omega\big)\big\|_{L(X)} \mathrm{d}s \\
			&+ \|\Omega\|_{L(X)} \int_{k\tau}^{(k + 1) \tau} \big\|\exp(s\Omega) - \exp\big((s - \tau) \Omega\big)\big\|_{L(X)} \mathrm{d}s \displaybreak \\
			&\leq \tau \exp\big(\alpha k \tau \|\Omega\|_{L(X)}\big) + \|\Omega\|_{L(X)} \int_{(k - 1) \tau}^{k \tau} \big\|\exp_{\tau}(s - \tau; \Omega) - \exp(s \Omega)\big\|_{L(X)} \mathrm{d}s \\
			&+ \|\Omega\|_{L(X)} \int_{k\tau}^{(k + 1) \tau} \int_{s - \tau}^{s} \Big\|\frac{\mathrm{d}}{\mathrm{d}\sigma} \exp(\sigma \Omega)\Big\|_{L(X)} \mathrm{d}\sigma \mathrm{d}s \\
			&\leq \tau \exp\big(\alpha k \tau \|\Omega\|_{L(X)}\big) + \tau^{2} \|\Omega\|_{L(X)} \exp\big(\alpha k \tau \|\Omega\|_{L(X)}\big) \\
			&+ \tau^{2} \|\Omega\|_{L(X)}^{2} \exp\big((k + 1) \tau \|\Omega\|_{L(X)}\big) \\
			&\leq \tau \exp\big(\alpha k \tau \|\Omega\|_{L(X)}\big) \Big(1 + \tau \|\Omega\|_{L(X)} + \tau \|\Omega\|_{L(X)}^{2} \exp\big(\tau \|\Omega\|_{L(X)}\big)\Big)  \\
			&\leq \tau \exp\big(\alpha k \tau \|\Omega\|_{L(X)}\big) \bigg(1 + \tau \|\Omega\|_{L(X)} \Big(1 + \tau \|\Omega\|_{L(X)} \exp\big(\tau \|\Omega\|_{L(X)}\big)\Big)\bigg) \\
			&\leq \tau \exp\big(\alpha k \tau \|\Omega\|_{L(X)}\big) \exp(\alpha \tau \|\Omega\|_{L(X)}\big) \leq \exp\big(\alpha (k + 1) \tau \|\Omega\|_{L(X)}\big)
		\end{align*}
		since $\alpha \geq 1$.
		The claim follows by induction.
	\end{proof}

	\begin{corollary}
		\label{COROLLARY_DELAYED_EXPONENTIAL_ASYMPTOTICS}
		Let the assumptions of Lemma \ref{LEMMA_DELAYED_EXPONENTIAL_ASYMPTOTICS} be satisfied and let $\gamma \geq 0$. Then
		\begin{equation}
			\big\|\exp_{\tau}(t + \gamma; \Omega) - e^{\Omega t}\big\|_{L(X)}
			\leq (\gamma + \tau) \big(1 + \|\Omega\|_{L(X)}\big) \exp\big(\alpha (T + \gamma + \tau) \|\Omega\|_{L(X)}\big). \notag
		\end{equation}
	\end{corollary}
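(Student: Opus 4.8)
The plan is to deduce the estimate directly from Lemma \ref{LEMMA_DELAYED_EXPONENTIAL_ASYMPTOTICS} by inserting the intermediate operator $e^{\Omega(t + \gamma + \tau)}$ and applying the triangle inequality:
\[
	\exp_{\tau}(t + \gamma; \Omega) - e^{\Omega t}
	= \big(\exp_{\tau}(t + \gamma; \Omega) - e^{\Omega(t + \gamma + \tau)}\big)
	+ \big(e^{\Omega(t + \gamma + \tau)} - e^{\Omega t}\big),
\]
estimating the two summands separately.

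For the first summand I would put $s := t + \gamma + \tau$, so that $\exp_{\tau}(t + \gamma; \Omega) = \exp_{\tau}(s - \tau; \Omega)$ with $s - \tau = t + \gamma \geq 0$, and note that $t \in [0, T]$ forces $s \in [0, T + \gamma + \tau]$. Since the constant $\alpha$ in Lemma \ref{LEMMA_DELAYED_EXPONENTIAL_ASYMPTOTICS} depends only on $\tau_{0}$ and $\|\Omega\|_{L(X)}$, not on the time horizon, the lemma applies verbatim with $T$ replaced by $T + \gamma + \tau$, giving
\[
	\big\|\exp_{\tau}(t + \gamma; \Omega) - e^{\Omega(t + \gamma + \tau)}\big\|_{L(X)}
	\leq \tau \exp\big(\alpha(T + \gamma + \tau)\|\Omega\|_{L(X)}\big).
\]
For the second summand I would use that $e^{(\cdot)\Omega}$ is the norm-convergent exponential series of the bounded operator $\Omega$, so that $e^{\Omega(t + \gamma + \tau)} - e^{\Omega t} = \int_{t}^{t + \gamma + \tau} \Omega e^{\Omega \sigma}\,\mathrm{d}\sigma$, and with $\|e^{\Omega \sigma}\|_{L(X)} \leq e^{\sigma\|\Omega\|_{L(X)}} \leq e^{(T + \gamma + \tau)\|\Omega\|_{L(X)}}$ for $\sigma$ in the integration range I obtain
\[
	\big\|e^{\Omega(t + \gamma + \tau)} - e^{\Omega t}\big\|_{L(X)}
	\leq (\gamma + \tau)\|\Omega\|_{L(X)}\, e^{(T + \gamma + \tau)\|\Omega\|_{L(X)}}.
\]

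To conclude I would add the two bounds, use $\alpha \geq 1$ to dominate $e^{(T+\gamma+\tau)\|\Omega\|_{L(X)}}$ by $\exp\big(\alpha(T+\gamma+\tau)\|\Omega\|_{L(X)}\big)$, factor the latter out, and finally bound the remaining coefficient $\tau + (\gamma + \tau)\|\Omega\|_{L(X)}$ by $(\gamma + \tau)\big(1 + \|\Omega\|_{L(X)}\big)$ using $\tau \leq \gamma + \tau$, which is exactly the asserted inequality. There is essentially no obstacle here: the only two points that merit a moment of care are that $\alpha$ is independent of $T$, so enlarging the horizon to $T + \gamma + \tau$ costs nothing, and that $s - \tau \geq 0 > -\tau$, so the argument of $\exp_{\tau}(s - \tau; \Omega)$ indeed lies on the branch where Lemma \ref{LEMMA_DELAYED_EXPONENTIAL_ASYMPTOTICS} is informative.
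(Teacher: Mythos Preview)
Your proposal is correct and follows essentially the same approach as the paper: the paper also splits via the intermediate term $e^{\Omega(t+\gamma+\tau)}$, applies Lemma~\ref{LEMMA_DELAYED_EXPONENTIAL_ASYMPTOTICS} on the enlarged horizon for the first piece, bounds the second piece by what it calls the mean value theorem for Bochner integration (your integral representation), and then combines using $\alpha\geq 1$ and $\tau\leq\gamma+\tau$. Your explicit remarks that $\alpha$ is independent of the time horizon and that $s-\tau\geq 0$ are exactly the small checks the paper leaves implicit.
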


	\begin{proof}
		Lemma \ref{LEMMA_DELAYED_EXPONENTIAL_ASYMPTOTICS} and the mean value theorem for Bochner integration yield
		\begin{align*}
			\big\|\exp_{\tau}(t + &\gamma; \Omega) - e^{\Omega t}\big\|_{L(X)} \\
			&\leq \big\|\exp_{\tau}(t + \gamma; \Omega) - e^{\Omega (t + \gamma + \tau)}\big\|_{L(X)} +
			\big\|e^{\Omega (t + \gamma + \tau)} - e^{\Omega t}\big\|_{L(X)} \\
			&\leq \tau \exp\big(\alpha (T + \gamma + \tau) \|\Omega\|_{L(X)}\big) + 
			(\gamma + \tau) \|\Omega\|_{L(X)} \exp\big((T + \gamma + \tau) \|\Omega\|_{L(X)}\big) \\
			&\leq (\gamma + \tau) \big(1 + \|\Omega\|_{L(X)}\big) \exp\big(\alpha (T + \gamma + \tau) \|\Omega\|_{L(X)}\big)
		\end{align*}
		as we claimed.
	\end{proof}

	Let $T > 0$, $\tau_{0} > 0$, $x_{0}, x_{1} \in X$ and $f \in L^{1}_{\mathrm{loc}}(0, \infty; X)$ be fixed and
	let $\bar{x} \in C^{1}\big([0, \infty), X\big)$ denote the unique mild solution to the Cauchy problem 
	(\ref{EQUATION_HARMONIC_OSCILLATOR_GENERAL})--(\ref{EQUATION_HARMONIC_OSCILLATOR_GENERAL_IC})
	from Section \ref{SECTION_CLASSICAL_HARMONIC_OSCILLATOR}.
	\begin{theorem}
		\label{THEOREM_DELAYED_EQUATION_ASYMPTOTICS}
		Let $\tau > 0$.	For any $\tau \in (0, \tau_{0})$, let $x(\cdot; \tau)$ denote the unique mild solution of
		(\ref{EQUATION_LINEAR_OSCILLATOR_WITH_DELAY_GENERAL})--(\ref{EQUATION_LINEAR_OSCILLATOR_WITH_DELAY_GENERAL_IC})
		for the initial data $\varphi(\cdot; \tau) \in C^{1}\big([-2\tau, 0], X\big)$.
		Then we have
		\begin{equation}
			\begin{split}
				\|x(\cdot; \tau) - \bar{x}\|_{C^{0}([0, T], X)} &\leq 3 \beta \Big(
				\|\varphi(-2\tau; \tau) - x_{0}\|_{X} + \|\dot{\varphi}(0; \tau) - x_{1}\|_{X}\Big) \\
				&+ 3 \beta \tau \Big(\|\varphi(\cdot; \tau)\|_{C^{1}([-2\tau, 0], X)} + \|f\|_{L^{1}(0, T; X)}\Big)
			\end{split}
			\notag
		\end{equation}
		with $\beta(T) := 2 \big(1 + \|\Omega\|_{L(X)}\big) \big(1 + \|\Omega^{-1}\|_{L(X)}\big) \exp\big(\alpha (T + 2\tau) \|\Omega\|_{L(X)}\big)$.
	\end{theorem}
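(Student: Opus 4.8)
The plan is to compare the explicit mild-solution representation for $x(\cdot;\tau)$ established at the end of Subsection~\ref{SECTION_HARMONIC_OSCILLATOR_WITH_DELAY_SOLUTION_REPRESENTATION} with the explicit representation~(\ref{EQUATION_HARMONIC_OSCILLATOR_GENERAL_EXPLICIT_SOLUTION}) for $\bar x$, exploiting that the delayed building blocks $x^{1}_{\tau}(t+\tau;\Omega)$, $x^{2}_{\tau}(t+2\tau;\Omega)$ and $x^{2}_{\tau}(t-s;\Omega)$ are the delayed counterparts of $\tfrac12(e^{\Omega t}+e^{-\Omega t})$, $\tfrac12\Omega^{-1}(e^{\Omega t}-e^{-\Omega t})$ and $\tfrac12\Omega^{-1}(e^{\Omega(t-s)}-e^{-\Omega(t-s)})$, since $x^{1}_{\tau}=\tfrac12(\exp_{\tau}(\cdot;\Omega)+\exp_{\tau}(\cdot;-\Omega))$ and $x^{2}_{\tau}=\tfrac12\Omega^{-1}(\exp_{\tau}(\cdot;\Omega)-\exp_{\tau}(\cdot;-\Omega))$. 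Subtracting the two representations, I split $x(t;\tau)-\bar x(t)$ into three groups: the \emph{data-mismatch} terms $\tfrac12(e^{\Omega t}+e^{-\Omega t})\big(\varphi(-2\tau;\tau)-x_{0}\big)$ and $\tfrac12\Omega^{-1}(e^{\Omega t}-e^{-\Omega t})\big(\dot\varphi(0;\tau)-x_{1}\big)$; the \emph{exponential-discrepancy} terms $\big(x^{1}_{\tau}(t+\tau;\Omega)-\tfrac12(e^{\Omega t}+e^{-\Omega t})\big)\varphi(-2\tau;\tau)$, $\big(x^{2}_{\tau}(t+2\tau;\Omega)-\tfrac12\Omega^{-1}(e^{\Omega t}-e^{-\Omega t})\big)\dot\varphi(0;\tau)$ and $\int_{0}^{t}\big(x^{2}_{\tau}(t-s;\Omega)-\tfrac12\Omega^{-1}(e^{\Omega(t-s)}-e^{-\Omega(t-s)})\big)f(s)\,\mathrm{d}s$; and the \emph{history-integral} term $-\int_{-2\tau}^{0}\dot{x}^{2}_{\tau}(t-s;\Omega)\dot\varphi(s;\tau)\,\mathrm{d}s$.

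For the data-mismatch terms I use $\|e^{\pm\Omega t}\|_{L(X)}\le e^{\|\Omega\|_{L(X)}T}\le\exp\big(\alpha(T+2\tau)\|\Omega\|_{L(X)}\big)$ for $t\in[0,T]$ (recall $\alpha\ge1$), which bounds them by $\beta(T)\,\|\varphi(-2\tau;\tau)-x_{0}\|_{X}$ and $\beta(T)\,\|\dot\varphi(0;\tau)-x_{1}\|_{X}$, respectively. For the exponential-discrepancy terms I invoke Corollary~\ref{COROLLARY_DELAYED_EXPONENTIAL_ASYMPTOTICS} applied both to $\Omega$ and to $-\Omega$ (the constants are unchanged since $\|-\Omega\|_{L(X)}=\|\Omega\|_{L(X)}$), with $\gamma=\tau$, $\gamma=2\tau$ and $\gamma=0$, respectively, and bound the data factors by $\|\varphi(\cdot;\tau)\|_{C^{1}([-2\tau,0],X)}$ and $\|f\|_{L^{1}(0,T;X)}$; each such term is then at most a numerical constant times $\tau\,(1+\|\Omega\|_{L(X)})(1+\|\Omega^{-1}\|_{L(X)})\exp\big(\alpha(T+2\tau_{0})\|\Omega\|_{L(X)}\big)$ times the corresponding datum. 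For the history-integral term I note that its integration interval has length $2\tau$, whereas $\|\dot{x}^{2}_{\tau}(\cdot;\Omega)\|_{L(X)}$ is bounded on $[0,T+2\tau]$ uniformly in $\tau$: differentiating the polynomial branches of $x^{2}_{\tau}$ termwise and majorizing as in the elementary estimate $\|\exp_{\tau}(r;\Omega)\|_{L(X)}\le\sum_{j\ge0}(\|\Omega\|_{L(X)}r)^{j}/j!=e^{\|\Omega\|_{L(X)}r}$ (valid since $0\le r-(j-1)\tau\le r$ inside each interval $(k-1)\tau\le r<k\tau$ for $j\le k$) gives $\|\dot{x}^{2}_{\tau}(r;\Omega)\|_{L(X)}\le\cosh(\|\Omega\|_{L(X)}r)\le e^{\|\Omega\|_{L(X)}r}$, so this term is $\le 2\tau\,e^{\|\Omega\|_{L(X)}(T+2\tau)}\,\|\varphi(\cdot;\tau)\|_{C^{1}([-2\tau,0],X)}$.

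Finally I collect the estimates. The two data-mismatch contributions sum to at most $\beta(T)\big(\|\varphi(-2\tau;\tau)-x_{0}\|_{X}+\|\dot\varphi(0;\tau)-x_{1}\|_{X}\big)$, while the three exponential-discrepancy contributions together with the history-integral contribution sum to at most $3\beta(T)\,\tau\big(\|\varphi(\cdot;\tau)\|_{C^{1}([-2\tau,0],X)}+\|f\|_{L^{1}(0,T;X)}\big)$ once the numerical constants (at most $3$ in total) are absorbed and $\tau<\tau_{0}$, $\alpha\ge1$ are used to dominate every exponential by $\exp\big(\alpha(T+2\tau)\|\Omega\|_{L(X)}\big)$; taking the supremum over $t\in[0,T]$ yields the claimed inequality with $\beta(T)=2(1+\|\Omega\|_{L(X)})(1+\|\Omega^{-1}\|_{L(X)})\exp\big(\alpha(T+2\tau)\|\Omega\|_{L(X)}\big)$. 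There is no genuine analytic obstacle; the two points needing care are recognizing that the history-integral term is of order $\tau$ (so that, like the exponential-discrepancy terms, it vanishes in the limit) and securing a $\tau$-uniform bound for $\|\dot x^{2}_{\tau}\|_{L(X)}$ on the $\tau$-independent interval $[0,T+2\tau_{0}]$, with the rest being bookkeeping of constants.
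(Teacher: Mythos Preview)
Your proposal is correct and follows essentially the same route as the paper's proof: both subtract the explicit representations, split the difference into the initial-data block, the $f$-integral block, and the history-integral block, and then invoke Corollary~\ref{COROLLARY_DELAYED_EXPONENTIAL_ASYMPTOTICS} to control the discrepancies between $x^{1}_{\tau},x^{2}_{\tau}$ and their non-delayed counterparts. Your treatment is slightly more explicit in two places---you separate the data-mismatch from the exponential-discrepancy contributions within the initial-data block (the paper handles both at once inside its $I_{0,1}$ via triangle inequality), and you supply a direct $\cosh$-type majorization for $\|\dot{x}^{2}_{\tau}\|_{L(X)}$ on the history-integral term---but neither of these changes the structure of the argument.
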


	\begin{proof}
		Using the explicit representation of $\bar{x}$ and $x(\cdot; \tau)$ and $x$ from Sections \ref{SECTION_CLASSICAL_HARMONIC_OSCILLATOR} and \ref{SECTION_HARMONIC_OSCILLATOR_WITH_DELAY_SOLUTION_REPRESENTATION},
		respectively, we can estimate
		\begin{equation}
			\|x(t; \tau) - \bar{x}(t)\|_{X} \leq
			I_{0, 1}(t) + I_{0, 2}(t) + I_{0, 2}(t) \text{ for } t \in [0, T] \notag
		\end{equation}
		with
		\begin{align*}
			I_{0, 1}(t) &:= \big\|x_{\tau}^{1}(t + \tau; \Omega) \varphi(-2\tau; \tau) - \tfrac{1}{2} (e^{\Omega t} + e^{-\Omega t}) x_{0}\big\|_{X} \\
			&+ \big\|x_{\tau}^{2}(t + 2\tau; \Omega) \dot{\varphi}(0; \tau) + \tfrac{1}{2} \Omega^{-1}(e^{\Omega t} - e^{-\Omega t}) x_{1}\big\|_{X}, \\
			I_{0, 2}(t) &:= \int_{0}^{t} \big\|x^{2}_{\tau}(t - s; \Omega) - \tfrac{1}{2} \Omega^{-1} (e^{\Omega(t - s)} - e^{-\Omega (t -s)})\big\|_{L(X)} \|f(s)\|_{X} \mathrm{d}s, \\
			I_{0, 3}(t) &:= \int_{-2\tau}^{0} \|x_{\tau}^{2}( t - s - \tau; \Omega)\|_{L(X)} \|\dot{\varphi}(s; \tau)\|_{X} \mathrm{d}s.
		\end{align*}

		Corollary \ref{COROLLARY_DELAYED_EXPONENTIAL_ASYMPTOTICS} yields
		\begin{align*}
			\big\|x_{\tau}^{1}(t + \tau; \Omega) - \tfrac{1}{2} (e^{\Omega t} + e^{-\Omega t})\big\|_{L(X)} &\leq \beta \tau, \\
			\big\|x_{\tau}^{2}(t + \tau; \Omega) - \tfrac{1}{2} \Omega^{-1} (e^{\Omega t} - e^{-\Omega t})\big\|_{L(X)} &\leq \beta \tau
		\end{align*}
		and, therefore,
		\begin{align*}
			I_{0, 1}(t) &\leq \beta \tau \big(\|\varphi(-2\tau; \tau)\|_{X} + \|\dot{\varphi}(0; \tau)\|_{X}\big) + \beta\big(\|\varphi(-2\tau; \tau) - x_{0}\|_{X} + \|\dot{\varphi}(0; \tau) - x_{1}\|_{X}\big) \\
			&\leq \beta \tau \|\varphi\|_{C^{1}([-2\tau, 0], X)} + \beta \big(\|\varphi(-2\tau; \tau) - x_{0}\|_{X} + \|\dot{\varphi}(0; \tau) - x_{1}\|_{X}\big).
		\end{align*}
		Similarly,
		\begin{equation}
			I_{0, 2}(t) \leq 2 \beta \tau \|f\|_{L^{1}(0, T; X)} \text{ and }
			I_{0, 3}(t) \leq 2 \beta \tau \|\varphi\|_{C^{1}([0, T], X)}. \notag
		\end{equation}
		Hence, the claim follows.
	\end{proof}

	\begin{corollary}
		Under conditions of Theorem \ref{THEOREM_DELAYED_EQUATION_ASYMPTOTICS}, we additionally have
		\begin{equation}
			\begin{split}
				\|x(\cdot; \tau) - &\bar{x}\|_{C^{1}([0, T], X)} \leq 3 (1 + \beta(T))(1 + \delta(T)) (1 + T) \Big(
				\|\varphi(-2\tau; \tau) - x_{0}\|_{X} \\
				+ &\|\dot{\varphi}(0; \tau) - x_{1}\|_{X} + \tau\big(\|\varphi(\cdot; \tau)\|_{C^{1}([-2\tau, 0], X)} + \|f\|_{L^{1}(0, T; X)} + \|x_{0}\|_{X} + \|x_{1}\|_{X}\big)\Big)
			\end{split}
			\notag
		\end{equation}
		with $\delta(T) := \|\Omega\|_{L(X)}^{2} \big(2 + \|\Omega^{-1}\|_{L(X)} + \|\Omega^{-1}\|_{L(X)} T\big) e^{\|\Omega\|_{L(X)} T}$.
	\end{corollary}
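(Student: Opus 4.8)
The plan is to combine the $C^{0}$-bound of Theorem~\ref{THEOREM_DELAYED_EQUATION_ASYMPTOTICS} with a matching bound for the first derivatives. Both $\bar{x}$ and $x(\cdot;\tau)$ are $C^{1}$ on $[0,\infty)$ (by the discussion in Section~\ref{SECTION_CLASSICAL_HARMONIC_OSCILLATOR} and by Corollary~\ref{COROLLARY_MILD_SOLUTION}, respectively), and $\|x(\cdot;\tau) - \bar{x}\|_{C^{1}([0,T],X)} \le \|x(\cdot;\tau) - \bar{x}\|_{C^{0}([0,T],X)} + \|\dot{x}(\cdot;\tau) - \dot{\bar{x}}\|_{C^{0}([0,T],X)}$, so it suffices to estimate the derivative term; the $C^{0}$-part is then supplied verbatim by Theorem~\ref{THEOREM_DELAYED_EQUATION_ASYMPTOTICS}, and at the very end all constants are crudely absorbed into the single factor $3(1+\beta(T))(1+\delta(T))(1+T)$.

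First I would subtract the two integrated equations of motion. By the mild formulation (\ref{EQUATION_LINEAR_OSCILLATOR_WITH_DELAY_MILD_FORMULATION}) one has $\dot{x}(t;\tau) = \dot{\varphi}(0;\tau) + \Omega^{2}\int_{0}^{t} x(s - 2\tau;\tau)\,\mathrm{d}s + \int_{0}^{t} f(s)\,\mathrm{d}s$, whereas integrating $\ddot{\bar{x}} = \Omega^{2}\bar{x} + f$ (which is (\ref{EQUATION_HARMONIC_OSCILLATOR_GENERAL})) gives $\dot{\bar{x}}(t) = x_{1} + \Omega^{2}\int_{0}^{t}\bar{x}(s)\,\mathrm{d}s + \int_{0}^{t} f(s)\,\mathrm{d}s$. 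The source terms cancel, so
\[
	\dot{x}(t;\tau) - \dot{\bar{x}}(t) = \big(\dot{\varphi}(0;\tau) - x_{1}\big) + \Omega^{2}\int_{0}^{t}\big(x(s - 2\tau;\tau) - \bar{x}(s)\big)\,\mathrm{d}s, \qquad t \in [0,T].
\]
The crucial step is to control the integrand uniformly in $s$. For $s \ge 2\tau$ I would write $x(s - 2\tau;\tau) - \bar{x}(s) = \big(x(s - 2\tau;\tau) - \bar{x}(s - 2\tau)\big) + \big(\bar{x}(s - 2\tau) - \bar{x}(s)\big)$, bounding the first summand by $\|x(\cdot;\tau) - \bar{x}\|_{C^{0}([0,T],X)}$ and the second by $2\tau\,\|\dot{\bar{x}}\|_{C^{0}([0,T],X)}$; for $s < 2\tau$ one has $x(s - 2\tau;\tau) = \varphi(s - 2\tau;\tau)$, and together with $\|\varphi(s - 2\tau;\tau) - \varphi(-2\tau;\tau)\|_{X} \le 2\tau\,\|\varphi(\cdot;\tau)\|_{C^{1}([-2\tau,0],X)}$ and $\|\bar{x}(s) - x_{0}\|_{X} \le 2\tau\,\|\dot{\bar{x}}\|_{C^{0}([0,T],X)}$ this yields $\|x(s - 2\tau;\tau) - \bar{x}(s)\|_{X} \le \|\varphi(-2\tau;\tau) - x_{0}\|_{X} + 2\tau\big(\|\varphi(\cdot;\tau)\|_{C^{1}([-2\tau,0],X)} + \|\bar{x}\|_{C^{0}([0,T],X)} + \|\dot{\bar{x}}\|_{C^{0}([0,T],X)}\big)$. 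Integrating over $[0,t]\subseteq[0,T]$ and multiplying by $\Omega^{2}$ produces, with a factor $\|\Omega\|_{L(X)}^{2}\,T$ in front, an estimate for $\|\dot{x}(t;\tau) - \dot{\bar{x}}(t)\|_{X}$ in terms of $\|\dot{\varphi}(0;\tau) - x_{1}\|_{X}$, $\|\varphi(-2\tau;\tau) - x_{0}\|_{X}$, $\|x(\cdot;\tau) - \bar{x}\|_{C^{0}([0,T],X)}$, and $\tau$ times the remaining data norms.

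Finally I would insert the explicit $C^{0}$-bound on $\|x(\cdot;\tau) - \bar{x}\|_{C^{0}([0,T],X)}$ from Theorem~\ref{THEOREM_DELAYED_EQUATION_ASYMPTOTICS} and bound $\|\bar{x}\|_{C^{0}([0,T],X)}$ and $\|\dot{\bar{x}}\|_{C^{0}([0,T],X)}$ in terms of the data by reading off (\ref{EQUATION_HARMONIC_OSCILLATOR_GENERAL_EXPLICIT_SOLUTION}) and its $t$-derivative together with $\|e^{\pm\Omega t}\|_{L(X)} \le e^{\|\Omega\|_{L(X)}T}$ on $[0,T]$; this is precisely where the ingredients $\|\Omega\|_{L(X)}^{2}$, $e^{\|\Omega\|_{L(X)}T}$ and the $\|\Omega^{-1}\|_{L(X)}$- and $\|\Omega^{-1}\|_{L(X)}T$-contributions to $\delta(T)$ originate (the $\|\Omega^{-1}\|_{L(X)}$ entering through the $x_{1}$- and $f$-parts of $\bar{x}$). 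Adding the derivative estimate to the $C^{0}$-estimate and over-estimating each of the finitely many coefficients by the product $3(1+\beta(T))(1+\delta(T))(1+T)$ finishes the argument. I expect no conceptual difficulty; the only genuine work is this last bookkeeping step, namely checking that every coefficient that arises is indeed dominated by that product.
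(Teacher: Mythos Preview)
Your proposal is correct and follows essentially the same route as the paper: subtract the integrated equations (\ref{EQUATION_LINEAR_OSCILLATOR_WITH_DELAY_MILD_FORMULATION}) and the integrated form of (\ref{EQUATION_HARMONIC_OSCILLATOR_GENERAL}), split the resulting integral $\int_{0}^{t}\|x(s-2\tau;\tau)-\bar{x}(s)\|\,\mathrm{d}s$ at $s=2\tau$, feed the $C^{0}$-bound from Theorem~\ref{THEOREM_DELAYED_EQUATION_ASYMPTOTICS} into the part with $s\ge 2\tau$, control the short piece $s\in[0,2\tau]$ via $\|\bar{x}\|_{C^{0}}$ read off from (\ref{EQUATION_HARMONIC_OSCILLATOR_GENERAL_EXPLICIT_SOLUTION}), and finally absorb all constants into $3(1+\beta(T))(1+\delta(T))(1+T)$. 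Your treatment of the shift $\bar{x}(s-2\tau)$ versus $\bar{x}(s)$ via the mean-value bound $2\tau\|\dot{\bar{x}}\|_{C^{0}}$ is in fact a touch more explicit than the paper's, which applies Theorem~\ref{THEOREM_DELAYED_EQUATION_ASYMPTOTICS} directly to $\|x(s-2\tau;\tau)-\bar{x}(s)\|$ without spelling out this intermediate step.
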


	\begin{proof}
		Integrating Equation (\ref{EQUATION_HARMONIC_OSCILLATOR_GENERAL}) and using Equation (\ref{EQUATION_HARMONIC_OSCILLATOR_GENERAL_IC})
		as well as exploiting Equations (\ref{EQUATION_LINEAR_OSCILLATOR_WITH_DELAY_MILD_FORMULATION})--(\ref{EQUATION_LINEAR_OSCILLATOR_WITH_DELAY_MILD_FORMULATION_IC}) yields
		\begin{align*}
			\|\dot{x}(t; \tau) - \dot{\bar{x}}(t)\| &\leq 
			\|\dot{\varphi}(0; \tau) - x_{1}\|_{X} +
			\int_{0}^{t} \|\Omega^{2} x(s - 2\tau; \tau) - \Omega^{2} \bar{x}(s)\|_{X} \mathrm{d}s \\
			&\leq I_{1, 1}(t) + I_{1, 2}(t) + I_{1, 3}(t) \text{ for } t \in [0, T] \notag
		\end{align*}
		with
		\begin{align*}
			I_{1, 1}(t) &:= \|\dot{\varphi}(0; \tau) - x_{1}\|_{X}, \quad I_{1, 2} := \|\Omega\|_{L(X)}^{2} \int_{-2\tau}^{0} \|\varphi(s) - \bar{x}(s + 2\tau)\|_{X} \mathrm{d}s, \\
			I_{1, 3}(t) &:= \|\Omega\|_{L(X)}^{2} \int_{2\tau}^{t} \|x(s - 2\tau; \tau) - \bar{x}(s)\|_{X} \mathrm{d}s
		\end{align*}
		Taking into account Equation (\ref{EQUATION_HARMONIC_OSCILLATOR_GENERAL_EXPLICIT_SOLUTION}), we can estimate
		\begin{equation}
			\|\bar{x}\|_{C^{0}([0, 2\tau], X)} \leq \big(\|x_{0}\| + \|\Omega^{-1}\|_{L(X)} \|x_{1}\|\big) e^{\|\Omega\|_{L(X)} T} +
			\|\Omega^{-1}\|_{L(X)} T e^{\|\Omega\|_{L(X)} T} \|f\|_{L^{1}(0, T; X)}. \notag
		\end{equation}
		Hence,
		\begin{align*}
			I_{1, 2}(t) \leq \delta \tau \Big(\|\varphi\|_{C^{0}([0, T], X)} + \|x_{0}\|_{X} + \|x_{1}\|_{X}\Big). \notag
		\end{align*}
		Applying Theorem \ref{THEOREM_DELAYED_EQUATION_ASYMPTOTICS}, we further get
		\begin{align*}
			I_{1, 3}(t) \leq 3 \|\Omega\|_{L(X)}^{2} T \beta \Big(&\|\varphi(-2\tau; \tau) - x_{0}\|_{X} + \|\dot{\varphi}(0; \tau) - x_{1}\|_{X} \\
			+ &\tau \big(\|\varphi(\cdot; \tau)\|_{C^{1}([-2\tau, 0], X)} + \|f\|_{L^{1}(0, T; X)}\big)\Big).
		\end{align*}
		Combining these inequalities and using again Theorem Theorem \ref{THEOREM_DELAYED_EQUATION_ASYMPTOTICS}, we deduce the estimate asserted.
	\end{proof}

\section*{Acknowledgment}
This work has been funded by a research grant from the Young Scholar Fund
supported by the Deutsche Forschungsgemeinschaft (ZUK 52/2) at the University of Konstanz, Konstanz, Germany.

\addcontentsline{toc}{chapter}{References}

\end{document}